\documentclass[reqno,11pt]{article}
\usepackage{mathrsfs}
\usepackage{amssymb}
 \UseRawInputEncoding
\setlength{\oddsidemargin}{0mm} \setlength{\evensidemargin}{0mm}
\setlength{\topmargin}{-10mm} \setlength{\textheight}{220mm}
\setlength{\textwidth}{155mm}
\usepackage{amsthm}
\usepackage{amsmath}
\usepackage{amsfonts}
\usepackage{enumerate}
\usepackage{bm}

\usepackage{multicol}

\usepackage{subfigure}
\usepackage{graphicx}
\usepackage{setspace,color,wrapfig}
\usepackage[colorlinks,linkcolor=blue,citecolor=cyan]{hyperref}

\numberwithin{equation}{section}
\newtheorem{theorem}{Theorem}[section]
\newtheorem{lemma}[theorem]{Lemma}

\newtheorem{problem}[theorem]{Problem}
\theoremstyle{definition}
\newtheorem{definition}[theorem]{Definition}

\theoremstyle{remark}
\newtheorem{remark}[theorem]{Remark}

\begin{document}
\title{  Subsonic  flows with a contact discontinuity in  a  finitely long axisymmetric cylinder  }
\author{Shangkun Weng\thanks{School of mathematics and statistics, Wuhan University, Wuhan, Hubei Province, 430072, People's Republic of China. Email: skweng@whu.edu.cn}\and Zihao Zhang\thanks{School of mathematics and statistics, Wuhan University, Wuhan, Hubei Province, 430072, People's Republic of China. Email: zhangzihao@whu.edu.cn}}
\date{}
\maketitle
\newcommand{\de}{{\mathrm{d}}}
\def\div{{\rm div\,}}
\def\curl{{\rm curl\,}}
\def\th{\theta}
\newcommand{\ro}{{\rm rot}}
\newcommand{\sr}{{\rm supp}}
\newcommand{\sa}{{\rm sup}}
\newcommand{\va}{{\varphi}}
\newcommand{\me}{\mathcal{M}}
\newcommand{\ml}{\mathcal{V}}
\newcommand{\md}{\mathcal{D}}
\newcommand{\mg}{\mathcal{G}}
\newcommand{\mh}{\mathcal{H}}
\newcommand{\mf}{\mathcal{F}}
\newcommand{\ms}{\mathcal{S}}
\newcommand{\mt}{\mathcal{T}}
\newcommand{\mn}{\mathcal{N}}
\newcommand{\mb}{\mathcal{P}}
\newcommand{\mm}{\mathcal{B}}
\newcommand{\mj}{\mathcal{J}}
\newcommand{\mk}{\mathcal{K}}
\newcommand{\my}{\mathcal{U}}
\newcommand{\mw}{\mathcal{W}}
\newcommand{\mq}{\mathcal{Q}}
\newcommand{\ma}{\mathcal{L}}
\newcommand{\mc}{\mathcal{C}}
\newcommand{\mi}{\mathcal{I}}
\newcommand{\n}{\nabla}
\newcommand{\e}{\tilde}
\newcommand{\h}{\hat}
\newcommand{\m}{\omega}
 \newcommand{\q}{{\rm R}}
\newcommand{\p}{{\partial}}
\newcommand{\z}{{\varepsilon}}
\renewcommand\figurename{\scriptsize Fig}
\begin{abstract}
 This paper concerns the structural stability of subsonic flows with a contact discontinuity  in  a  finitely long axisymmetric cylinder. We establish the existence and uniqueness of axisymmetric subsonic flows with a contact discontinuity by prescribing the horizontal mass flux distribution, the swirl velocity, the  entropy
  and the Bernoulli's quantity  at the entrance and the radial velocity at the exit. It can be formulated as a free boundary problem with the contact discontinuity  to be determined simultaneously with the flows.  Compared with the two-dimensional case, a new difficulty arises due to  the  singularity near the axis. One of the key points in the analysis is the introduction of an  invertible modified Lagrangian transformation which can  overcome this difficulty and straighten the contact discontinuity. Another one is
    to utilize the  deformation-curl decomposition for the steady Euler system introduced in \cite{WX19} to effectively decouple the  hyperbolic and elliptic  modes. Finally,  the contact discontinuity will be  located by using the implicit function theorem.
\end{abstract}
\begin{center}
\begin{minipage}{5.5in}
Mathematics Subject Classifications 2010: 35J15, 35L65,  76J25, 76N15.\\
Key words:  contact discontinuity,  structural stability, the modified Lagrangian transformation, the deformation-curl decomposition.
\end{minipage}
\end{center}
\section{Introduction  }\noindent
\par In this paper, we are concerned with the structural stability of  subsonic flows with a  contact discontinuity governed by the three-dimensional steady  full Euler system in  a  finitely long axisymmetric cylinder.  The  three-dimensional steady  full Euler system for compressible  inviscid gas  is of the  following form:
\begin{align}\label{1-1}
\begin{cases}
\p_{x_1}(\rho u_1)+\p_{x_2}(\rho u_2)+\p_{x_3}(\rho u_3)=0,\\
\p_{x_1}(\rho u_1^2) + \p_{x_2}(\rho u_1 u_2) + \p_{x_3}(\rho u_1u_3)+\p_{x_1}P=0,\\
\p_{x_1}(\rho u_1 u_2) + \p_{x_2}(\rho u_2^2) + \p_{x_3}(\rho u_2u_3)+\p_{x_2}P=0,\\
\p_{x_1}(\rho u_1 u_3) + \p_{x_2}(\rho u_2 u_3) + \p_{x_3}(\rho u_3^2)+\p_{x_3}P=0,\\
\p_{x_1}(\rho u_1 E+u_1 P)+\p_{x_2}(\rho u_2 E+u_2 P)+\p_{x_3}(\rho u_3 E+u_3 P)=0,\\
\end{cases}
\end{align}
where $\bm u=(u_1,u_2,u_3)$ is the velocity, $\rho$ is the density, $P$ is the pressure, $E$ is the
 energy, respectively.
 For polytropic gas, the equation of state and the
energy are of the form
\begin{equation*}
P= A(S)\rho^{\gamma}, \quad{\rm {and}}\quad E=\frac{1}{2}|{\bm u}^2|+\frac{ P}{(\gamma-1)\rho},
\end{equation*}
where  $A(S)= R e^{S}$ and $\gamma\in (1,+\infty)$, $R$ are positive constants.
   Denote the Bernoulli's function and the local sonic speed by $B=\frac12|{\textbf{u}}|^2+\frac{\gamma P}{(\gamma-1)\rho}$  and $ c(\rho,A)=\sqrt{A\gamma} \rho^{\frac{\gamma-1}{2}}$, respectively. Then  the system \eqref{1-1} is hyperbolic for supersonic flows ($ |\textbf{u}|>c(\rho,A) $), and hyperbolic-elliptic coupled for subsonic flows ($ |\textbf{u}|<c(\rho,A) $).
\par  To understand   the contact discontinuity surface, we first  give the definition of steady flows  with  a contact discontinuity.
 Let $ \md\in \mathbb{R}^3 $ be an open and connected domain. Suppose that a non-self-intersecting $ C^1$-curve  $ \Gamma$ divides $ \md $ into two disjoint open subsets $ \md^\pm$  such that $ \md= \md^- \cup\Gamma \cup \md^+ $.
 Assume that $ \bm U = (\rho, u_1, u_2,u_3,P)$ satisfies the following properties:
\begin{enumerate}[(1)]
\item $\bm U \in[L^\infty(\md)\cap C^{1}_{loc}(\md^\pm)\cap C^0_{loc}(\md^\pm\cup \Gamma)]^5$;
\item For any $\eta\in C_0^{\infty}(\md)$,
\begin{equation}\label{1-2}
\begin{cases}
\int_{\md}(\rho u_1\p_{x_1}\eta+\rho u_2\p_{x_2}\eta+\rho u_3\p_{x_3}\eta)\de \mathbf{x}=0,\\
\int_{\md}((\rho u_1^2+P)\p_{x_1}\eta+\rho u_1u_2\p_{x_2}\eta+\rho u_1u_3\p_{x_3}\eta)\de \mathbf{x}=0,\\
\int_{\md}(\rho u_1u_2\p_{x_1}\eta+(\rho u_2^2+P)\p_{x_2}\eta+\rho u_2u_3\p_{x_3}\eta)\de \mathbf{x}=0,\\
\int_{\md}(\rho u_1u_3\p_{x_1}\eta+\rho u_2u_3\p_{x_2}\eta+ (\rho u_3^2+P)\p_{x_3}\eta)\de \mathbf{x}=0,\\
\int_{\md}(\rho u_1(E+\frac{P}{\rho})\p_{x_1}\eta+\rho u_2(E+\frac{P}{\rho})\p_{x_2}\eta+\rho u_3(E+\frac{P}{\rho})\p_{x_3}\eta)\de \mathbf{x}=0.\\
\end{cases}
\end{equation}
\end{enumerate}
\par By integration by parts, we get the Rankine-Hugoniot conditions:
\begin{equation}\label{1-3}
\begin{cases}
n_1[\rho u_1]+n_2[\rho u_2]+n_2[\rho u_3]=0,\\
n_1[\rho u_1^2]+n_1[P]+n_2[\rho u_1u_2]+n_3[\rho u_1u_3]=0,\\
n_1[\rho u_1u_2]+n_2[\rho u_2^2]+n_2[P]+n_3[\rho u_2u_3]=0,\\
n_1[\rho u_1u_3]+n_2[\rho u_2u_3]+n_3[\rho u_3^2]+n_3[P]=0,\\
n_1[ \rho u_1(E+\frac{P}{\rho})]+n_2[\rho u_2( E+\frac{P}{\rho})]+n_3[\rho u_3( E+\frac{P}{\rho})]=0,\\
\end{cases}
\end{equation}
where $ \mathbf{n} = (n_1, n_2,n_3) $  is the unit normal vector to $\Gamma$, and $[F](\mathbf{x}) = F_+(\mathbf{x})- F_-(\mathbf{x})$
denotes the jump across the   surface $\Gamma$ for a piecewise smooth function $ F $.
\par Let $ \bm \tau_1= (\tau_{11},\tau_{21},\tau_{31})$ and $ \bm \tau_2= (\tau_{12},\tau_{22},\tau_{32})$ as the unit tangential vectors to $\Gamma$, which means that $ \mathbf{n}\cdot{\bm \tau_k}= 0$. Taking the dot product of $(\eqref{1-3}_2,\eqref{1-3}_3,\eqref{1-3}_4)$ with $ \mathbf{n}$  and $ \bm \tau_k $ respectively, one has
\begin{equation}\label{1-4}
 [\rho(\mathbf{u}\cdot\mathbf{n})^2+P]_{\Gamma}= 0, \quad \rho(\mathbf{u}\cdot\mathbf{n})[\mathbf{u}\cdot \bm \tau_k]_{\Gamma}= 0, \quad
 {\rm{for}} \quad k=1,2.
\end{equation}
 Assume that $ \rho> 0 $  in $ \bar \md $, \eqref{1-4} implies either $ \mathbf{u}\cdot\mathbf{n} = 0 $
on $\Gamma$  or $[\mathbf{u}\cdot \bm \tau_k]_{\Gamma}=0$.
If $ \mathbf{u}\cdot\mathbf{n} \neq 0 $ and $[\mathbf{u}\cdot \bm \tau_k]_{\Gamma}=0 $ hold on $ \Gamma $, the surface $ \Gamma $ is called a shock; if the flow moves along both sides of $ \Gamma $ such that  $\mathbf{u}\cdot\mathbf{n} = 0 $ on $ \Gamma $, the  surface $ \Gamma $ is called a contact discontinuity. In the latter case, $\mathbf{u}\cdot\mathbf{n} = 0 $ and the first equation in \eqref{1-4} give $ [P]=0 $. Then we get the R-H conditions corresponding to a contact discontinuity  as follows:
\begin{equation}\label{1-5}
 \mathbf{u}\cdot\mathbf{n}=0 \quad {\rm{and}} \quad [P]=0 \quad  {\rm{on}} \ \Gamma.
 \end{equation}
 \begin{definition}
We define $ \bm U = (\rho, u_1, u_2,u_3,P)$  to be a weak solution of the full Euler system \eqref{1-1} in $ \md $ with a contact discontinuity $\Gamma$ if the the following properties hold:
\begin{enumerate}[(i)]
\item $\Gamma$ is a non-self-intersecting $ C^1$-curve  dividing $\md $ into two disjoint open subsets $ \md^\pm$  such that $ \md = \md^- \cup\Gamma \cup \md^+ $;
\item  $ \bm U $ satisfies $\rm{(1)}$ and $ \rm{(2)} $;
\item $ \rho>0 $ in $ \bar \md$;
 \item $(\mathbf{u}|_{\bar \md_-\cap\Gamma}-\mathbf{u}|_{\bar \md_+\cap\Gamma })\neq \mathbf{0} $
 holds for all $ \mathbf{x}\in\Gamma$;
 \item $ \mathbf{u}\cdot\mathbf{n}=0 $ and $ [P]=0 $ on $\Gamma$.
 \end{enumerate}
 \end{definition}
 \par This is a continuous work on the study of  subsonic  Euler flows with a contact discontinuity  in  a finitely long nozzle. In the previous work \cite{WZ23}, we established the existence and
uniqueness of  subsonic flows with a contact discontinuity in a two-dimensional  finitely long slightly curved nozzle. As an attempt to extend the results from \cite{WZ23} to the three dimensional case, this paper investigates  the structural stability of subsonic flows with a contact discontinuity  in  a finitely long axisymmetric cylinder under the suitable axisymmetric perturbations of boundary conditions.
\par The study on the steady compressible  flows  with a contact discontinuity is not only of fundamental importance in developing the mathematical theory of partial differential equations arising from fluid dynamics, but also has  important applications to engineering designs,  such as rocket launching, sharp charged jet and so on. Up to now, there have been many works in the literature on  the steady flows with a contact discontinuity. For the subsonic flow, the stability  of flat contact discontinuity in infinite nozzles was established in  \cite{BM09} and \cite{ BP19,PB19}. The authors in \cite{ BP19,PB19}   decompose the Rankine-Hugoniot conditions on the contact discontinuity via  Helmholtz decomposition so that the compactness of approximated solutions can be achieved.  The uniqueness and existence of the contact discontinuity in infinitely  long nozzles, which is not a perturbation of
the flat contact discontinuity, was obtained in \cite{CHWX19}. The existence, uniqueness and stability of subsonic flows past
an airfoil with a vortex line were obtained  in \cite{CXZ22}.    The key idea in \cite{CXZ22}  is to  use  the implicit function theorem as the framework to solve the problem of subsonic flows past
an airfoil with a vortex line.   Inspired by \cite{CXZ22}, the stability of contact discontinuity in a finite nozzle was established  in \cite{WZ23} by using the implicit function
theorem.   For the supersonic flow, the stability  of flat contact discontinuity in  finitely long nozzles was studied  in \cite{HFWX19}. The stability of three-dimensional supersonic  contact discontinuity was investigated in  \cite{WY13,WF15}. Recently, the stability  of   supersonic contact discontinuity for the two-dimensional steady rotating Euler system in a finitely curved nozzle has been  established in \cite{WSZ23}. For the transonic flow, the stability  of flat contact discontinuity in  finitely long nozzles was established in \cite{HFWX21}. The stability  of two-dimensional  transonic contact discontinuity over a solid wedge  and three-dimensional   transonic contact discontinuity were established in \cite{CYK13,CKY13} and \cite{WY15}.
\par We make some comments on the new ingredients in our analysis for the contact discontinuity problem. Note that  the contact discontinuity is part of the  solution  and is unknown, thus  this is a free boundary that separates both subsonic flows in the  inner and outer  layers of the cylinder.  The strategy to overcome this difficulty in the  two dimensional case \cite{WZ23} is to introduce a Lagrangian transformation to straighten the contact discontinuity. The idea also applies to three dimensional steady axisymmetric Euler system.  However, in the three dimensional axisymmetric setting, there is a singular term $r$ in the density equation.   Inspired by \cite{WXX21},  the singular term $ r $ in the density equation is of order $ O(r) $ near the axis $ r=0 $, hence we can  find a simple  modified
Lagrangian transformation  such that it is invertible near the axis and also straightens the contact discontinuity. Another key issue is to decompose the hyperbolic and elliptic  modes in the
steady axisymmetric Euler system. It is well-known that   the
steady axisymmetric  Euler system  is hyperbolic-elliptic mixed in subsonic regions, whose effective decomposition of elliptic and hyperbolic modes is crucial for  developing a well-defined iteration.   Here we will use the deformation-curl decomposition introduced in \cite{WX19,WS19} to effectively decouple the hyperbolic and elliptic modes in subsonic
regions.
\par The other key ingredient in our  analysis is to employ the implicit function
theorem to locate the contact discontinuity.  The idea is inspired  by the discussion of the
airfoil problem in \cite{CXZ22}. We   choose    a suitable  H\"{o}lder space   and   design a  proper  map   to verify the conditions
in the implicit function theorem.  However, it seems quite difficult   to verify that the isomorphism of the  differential of the map for general background flows with a straight contact discontinuity. Here we choose the background outer-layer flow is stagnant and restrict the perturbation only on the entrance of the inner-outer flow. In this case, the outer-layer flow is fixed and one can prove the isomorphism,   the contact discontinuity can be located by   the implicit function theorem.
\par This paper will be arranged as follows. In Section 2, we formulate the problem of subsonic flows with a contact discontinuity  in  a finitely long axisymmetric cylinder and state the main result. In Section 3,  the modified Lagrange transformation is  employed to straighten the
contact discontinuity and reformulate the free boundary value problem 2.2. Then we
 use the deformation-curl decomposition in \cite{WX19,WS19} to derive an equivalent system.
Finally, we state the main steps to solve the free boundary problem 3.1.
In Section 4, we first linearize the nonlinear  system and solve the linear  system in a suitable weighted H$\ddot{\rm{o}}$lder space. Then  the framework of the contraction mapping theorem can be used  to find the solution of the nonlinear  system.
 In Section 5, we choose a suitable H$\ddot{\rm{o}}$lder space and design a proper  map to verify the conditions in the implicit function theorem. Then by using the   implicit function theorem, we  locate the contact discontinuity. In Section 6, we finish the proof of the main theorem.
 \section{Mathematical formulation of the problem}\noindent
  \par In this section,  we first construct a special class of subsonic Euler  flows with a straight contact discontinuity in a finitely long axisymmetric cylinder. Then we give a detailed formulation  of the stability problem for these background flows with a contact discontinuity  and state the main result.
  \subsection{The background solutions}\noindent
  \par The axisymmetric  cylinder (Fig 1) of the length $ L $ is given by
\begin{equation*}
\mn:=\{(x_1,x_2,x_3)\in \mathbb{R}^3:0<x_1<L,\ 0\leq x_2^2+x_3^2<1\}.
\end{equation*}
\begin{figure}
  \centering
  \includegraphics[width=11cm,height=5cm]{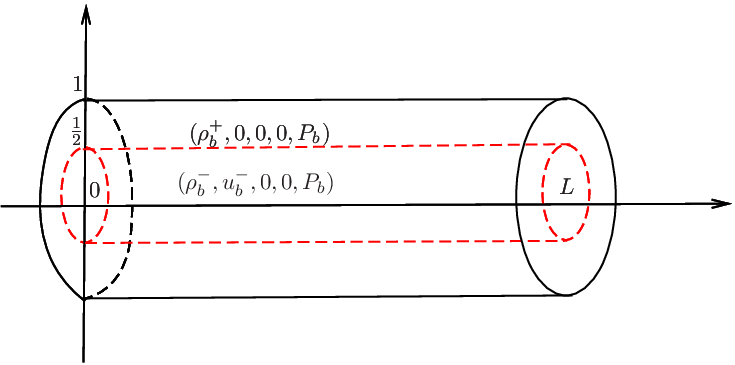}
  \caption{Subsonic flows with a contact discontinuity in an  axisymmetric cylinder}
\end{figure}
  Consider two layers  of steady smooth Euler flows  separated by  the cylindrical surface $ r=\sqrt{x_2^2+x_3^2}=\frac{1}{2} $ satisfying the following properties:
     \begin{enumerate}[(i)]
\item The velocity and density of the outer and inner layers are given by $ (0,0,0),\rho_b^{+} $ and $ (u_b^{-},0,0),\rho_b^{-} $, where $ u_b^{-}>0 $ and   $\rho_b^\pm>0$;
  \item the pressure of both the outer and inner layers   is given by the same positive constant $ P_b $;
\item the  flows in the   outer and inner layers are subsonic, i.e.,
 \begin{equation*}
 (u_b^{-})^2<\frac{\gamma P_b}{\rho_b^-}.
\end{equation*}
\end{enumerate}
Then
\begin{equation}\label{2-1}
 \bm{U}_b=
  \begin{cases}
   \bm{U}_b^{+}:=(\rho_b^{+},0,0,0,P_b),  \quad &{\rm{for}}\quad \frac{1}{2}< r<1,\\
   \bm{U}_b^{-}:=(\rho_b^{-},u_{b}^{-},0,0,P_b),  \quad &{\rm{for}}\quad 0\leq r<\frac{1}{2},\\
  \end{cases}
 \end{equation}
 with a contact discontinuity on the surface $ r=\frac{1}{2} $ satisfy  the steady Euler system \eqref{1-1} in the sense of  Definition {\rm{1.1}},
  which will be called the background solutions in this paper.
  This paper is going to establish the structural
stability of these background  solutions  under the suitable axisymmetric perturbations of boundary conditions.
\subsection{The stability problem and the main result}\noindent
\par Let
 $(x, r, \theta)$ be the cylindrical coordinates of $ (x_1, x_2, x_3)\in \mathbb{R}^3 $,  that is
\begin{equation*}
x=x_1,\ r=\sqrt {x_2^2 + x_3^2},\ \theta  = \arctan \frac{{{x_3}}}{{{x_2}}}.
\end{equation*}
Any function $f({\bf x})$ can be represented as $f({\bf x})=f(x,r,\theta)$, and a vector-valued function ${\bf h}({\bf x})$ can be represented as ${\bf h}({\bf x})=h_x(x,r,\theta)\,{\bf e}_x+ h_r(x,r,\theta)\,{\bf e}_r+ h_{\theta}(x,r,\theta)\,{\bf e}_{\theta}$,
where
\begin{equation*}
{\bf e}_x=(1,0,0),\quad {\bf e}_r=(0,\cos\theta, \sin\theta),\quad {\bf e}_{\theta}=(0,-\sin\theta,\cos\theta).
\end{equation*}
We say that a function $f({\bf x})$ is  axisymmetric if its value is independent of $\th$ and that a vector-valued function ${\bf h}= (h_x, h_r, h_{\th})$ is axisymmetric if each of functions $h_x({\bf x}), h_{r}({\bf x})$ and $h_{\th}({\bf x})$ is axisymmetric.
\par Assume that
\begin{equation*}
\begin{aligned}
\rho({\textbf x})&=\rho(x,r),\quad P({\textbf x})=P(x,r),\quad A({\textbf x})=A(x,r),\\
{\bf u}({\textbf x})&= u_x(x,r) {\bf e}_{x}+ u_{r}(x,r) {\bf e}_{r}+ u_{\theta}(x,r) {\bf e}_{\theta},
\end{aligned}
\end{equation*}
then \eqref{1-1} can be  rewritten as
\begin{equation}\label{2-2}
\begin{cases}
\begin{aligned}
&\p_x (\rho u_x) + \p_r (\rho u_r) + \frac{\rho u_r}{r}=0,\\
&\rho (u_x\p_x + u_r \p_r ) u_x + \p_x P = 0,\\
&\rho (u_x\p_x + u_r \p_r ) u_r - \frac{\rho u_{\th}^2}{r} + \p_r P= 0,\\
&\rho (u_x\p_x + u_r \p_r) (ru_\theta)= 0,\\
&\rho (u_x\p_x + u_r \p_r ) A=0.\\
\end{aligned}
\end{cases}
\end{equation}
The    axis and boundary of the cylinder are denoted by $ \Gamma_a $ and $ \Gamma_w $, i.e;
\begin{equation}\label{2-4}
\Gamma_a:=\{(x,r)\in \mathbb{R}^2: 0<x<L,  r=0\},\quad
\Gamma_w:=\{(x,r)\in \mathbb{R}^2: 0<x<L,  r=1\}.
\end{equation}
The exit of the cylinder is denoted by
\begin{equation}\label{2-5}
\Gamma_{L}:=\{(x,r):   x=L,\ 0\leq r<1\}.
\end{equation}
The entrance  of the cylinder  is separated into two parts:
\begin{equation}\label{2-6}
\begin{aligned}
\Gamma_{0}^+:=\{(x,r):   x=0,\ \frac12<r<1\}, \quad \Gamma_{0}^-:=\{(x,r):  x=0,\ 0\leq r<\frac12\}.
\end{aligned}
\end{equation}
At the entrance, we prescribe the boundary data    for  the horizontal mass distribution  $ J=\rho u_x $,  the swirl velocity $  u_{\th} $, the entropy $A $ and the Bernoulli's quantity $ B $:
\begin{equation}\label{2-7}
(J,u_{\th},A,B)(0,r)=
\begin{cases}
  (0,0,A_{b}^+,B_{b}^+),\quad &{\rm{on}}\quad \Gamma_0^+,\\
  (J_{0},\nu_{0},A_{0},B_{0})(r),\quad & {\rm{on}}\quad \Gamma_0^-,\\
  \end{cases}
  \end{equation}
 where
 \begin{equation*}
 A_b^+=\frac{P_b}{(\rho_b^+)^{\gamma}}, \quad B_b^+=\frac{\gamma P_b}{(\gamma-1)\rho_b^+},
 \end{equation*}
 and functions
 $ (J_{0},\nu_{0},A_{0},B_{0})(r)\in
  \left(C^{1,\alpha}([0,\frac12])\right)^4  $ are  close to the background solutions  in some sense that will be clarified later. Moreover, the compatibility conditions hold:
  \begin{equation}\label{2-8}
\nu_{0}(0)=\p_r(\nu_{0},A_{0}, {B}_{0})(0)=0,
 \end{equation}
 since    $(\nu_{0},A_{0},B_{0})(r)$ are $C^{1,\alpha}$ in $\Gamma_0^-$.
  At the exit, the  following boundary condition is satisfied:
  \begin{equation}\label{2-9}
{\bf u}\cdot {\bf e}_r= 0, \quad {\rm{on}}\quad \Gamma_L.
  \end{equation}
\par We expect the flow in the cylinder  will be separated by a  contact discontinuity  $ \Gamma:=\{r=g_{cd}(x),0<x<L\} $ with $ g_{cd}(0)=\frac12 $, which divides the domain $ \mn $ into the subsonic and subsonic regions:
\begin{equation}\label{2-10}
\mn^+:=\mn\cap\{g_{cd}(x)<r<1\}, \quad \mn^-:=\mn\cap\{ 0\leq r<g_{cd}(x)\}.
\end{equation}
Let
\begin{equation*}
{\bm {U}}(x,r)=
  \begin{cases}
   {\bm {U}}^+(x,r):=(\rho^+,u_x^+,u_{r}^+,u_\th^+,P^+)(x,r)\quad {\rm{in}}\quad   \mn^+,\\
   {\bm {U}}^-(x,r):=(\rho^-,u_x^-,u_{r}^-,u_\th^-,P^-)(x,r)  \quad {\rm{in}}\quad  \mn^-.\\
  \end{cases}
 \end{equation*}
 Along the contact discontinuity $ r=g_{cd}(x) $, the following Rankine-Hugoniot conditions hold:
\begin{equation}\label{2-11}
{\bf u} \cdot {\bf n}_{cd}=0, \quad P^+=P^-, \quad {\rm{on}}\quad \Gamma,
\end{equation}
where
\begin{equation*}
{\bf n}_{cd}=\frac{-g_{cd}^\prime(x){\bf e}_{x}+{\bf e}_{r}}{\sqrt{1+|g_{cd}^\prime(x)|^2}}.
\end{equation*}
On the nozzle wall $ \Gamma_w$ , the flow satisfies the slip condition $  \textbf{u}^+\cdot \textbf{n}^+=0 $, where $ \textbf{n}^+ $ is the outer normal vector of the nozzle  wall. Using cylindrical coordinates, the slip boundary condition  can be rewritten as
\begin{equation}\label{2-12}
 u_r^+(x,1)=0, \quad {\rm{on}}\quad \Gamma_w.
\end{equation}
Moreover, since the flow is smooth near the axis $ \Gamma_a $, thus we have the following compatibility conditions:
\begin{equation}\label{2-13}
u_r^-(x,0)=u_\th^-(x,0)=0, \quad \forall x\in[0,L].
\end{equation}
\par In summary, we will investigate the following problem:
 \begin{problem}
  Given  functions $ (J_{0},\nu_{0},A_{0},B_{0})(r)$ at the entrance satisfying \eqref{2-8}, find a unique piecewise smooth axisymmetric subsonic solution $ (\bm{U}^+ ,\bm{U}^-) $ defined on $ \mn^+ $ and $ \mn^- $ respectively,  with the contact discontinuity $ \Gamma $ satisfying the axisymmetric  Euler system \eqref{2-2} in the sense of Definition {\rm{1.1}}  and the Rankine-Hugoniot conditions in \eqref{2-11} and the slip boundary condition in \eqref{2-12} and the compatibility conditions \eqref{2-13}.
  \end{problem}
  \par It is easy to see that $ \bm U_b^+ $ satisfies the following properties:
\begin{itemize}
 \item $\rho_b^{+}>0 $ and $ 0<\sqrt{\frac{\gamma P_b}{\rho_b^+}} $;
 \item  $ A_b^+=\frac{P_b}{(\rho_b^+)^{\gamma}} $ and $ B_b^+=\frac{\gamma P_b}{(\gamma-1)\rho_b^+} $;
      \item $  \bm 0\cdot \bm v=0 $ for any vector $ \bm v\in \mathbb{R}^2$.
  \end{itemize}
  From this observation, we fix ${\bm {U}}^+(x_1,x_2)=  \bm U_b^+$ in $ \mn^+ $ and  solve the following free boundary value problem:
    \begin{problem}
   Under the assumptions of Problem {\rm {2.1}}, find a smooth axisymmetric subsonic solution $\bm{U}^- $  defined on $ \mn^- $  with the contact discontinuity $ \Gamma: r=g_{cd}(x) $ such that the following hold.
    \begin{enumerate}[ \rm (a)]
 \item $ g_{cd}(0)=\frac12 $.
 \item The flow has positive density in the inner cylinder, i.e, $ \rho^->0 $.
  \item Along the contact discontinuity $ r=g_{cd}(x) $, the following Rankine-Hugoniot conditions hold:
      \begin{equation}\label{2-14}
{\bf u}^-\cdot {\bf n}_{cd}=0, \quad P^-=P_b, \quad {\rm{on}}\quad \Gamma.
\end{equation}
\item On the axis $ \Gamma_a $,  the following compatibility conditions hold:
\begin{equation}\label{2-15}
u_r^-(x,0)=u_\th^-(x,0)=0, \quad \forall x\in[0,L].
\end{equation}
\item On the exit,  the following boundary condition holds:
\begin{equation}\label{2-16}
u_r^-(L,r)=0.
\end{equation}
\end{enumerate}
\end{problem}
\begin{figure}
  \centering
  \includegraphics[width=11cm,height=5cm]{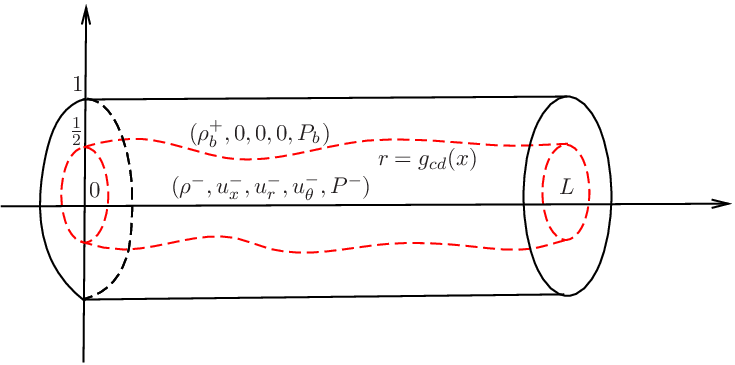}
  \caption{Problem \rm 2.2}
\end{figure}
    \par Before we state our  main result, some weighted H\"{o}lder norms are first introduced: For  a bounded connected open set $\md\in \mathbb{R}^3$, let $ \mb  $ be a closed portion of $\p \md$.  For $ \mathbf{x},\tilde{\mathbf{x}}\in \md $, define
  \begin{equation*}
  \begin{aligned}
 &\delta_{\mathbf{x}}:=\rm{dist}(\mathbf{x},\mb), \quad
  \delta_{\textbf{x},\tilde{\textbf{x}}}:
  =\min\{\delta_{\mathbf{x}},\delta_{\tilde{\mathbf{x}}}\},\\
  \end{aligned}
  \end{equation*}
  Given positive integer $ m $, $ \alpha\in(0,1) $ and $ k\in \mathbb{R} $,  we
  define
  \begin{equation*}
  \begin{aligned}
  {\|u\|}_{m,0;\md}^{(k,\mb)}\ &:=\sum_{0\leq|\beta|\leq m}\sup_{\bm x\in \md} \delta_{\mathbf{x}}^{\max(|\beta|+k,0)}
  |D^{\beta}u(\bm x)|;\\
  [u]_{m,\alpha;\md}^{(k,\mb)}\ &:=\sum_{|\beta|=m}\sup_{\bm x,\tilde{\bm x}\in \md,\bm x\neq \tilde{\bm x}}\delta_{\mathbf{x},\tilde{\mathbf{x}}}^{\max(m+\alpha+k,0)}
  \frac{|D^{\beta}u({\bm x})-D^{\beta}u(\tilde{\bm x})|}{|\bm x-\tilde{\bm x}|^{\alpha}};\\
  \|u\|_{m,\alpha;\md}^{(k,\mb)}\ &:=\|u\|_{m,0;\md}^{k,\mb}
  +[u]_{m,\alpha;\md}^{k,\mb}\\
  \end{aligned}
  \end{equation*}
 with the corresponding function space defined as
   \begin{equation*}
  C_{m,\alpha}^{(k,\mb)}(\md)=\{ u: \|u\|_{m,\alpha;\md}^{(k,\mb)}<\infty\}.
\end{equation*}
For a vector function $ \bm u=(u_1,u_2,\cdots,u_n) $, define
\begin{equation*}
   \|\bm u\|_{m,\alpha;\md}^{(k,\mb)}:=\sum_{i=1}^{n}\| u_i\|_{m,\alpha;\md}^{(k,\mb)}.
\end{equation*}
   \par
The main theorem of this paper can be stated as follows.
\begin{theorem}
Given  functions $ (J_{0},\nu_{0},A_{0},B_{0})(r)$ at the entrance satisfying \eqref{2-8} and $ \alpha \in (\frac12,1) $, we define
\begin{equation}\label{2-17}
   \begin{aligned}
   \sigma(J_{0},\nu_{0},A_{0},B_{0}):&=
   \|(J_{0},\nu_{0},A_{0},B_{0}) -(J_{b}^-,0,A_{b}^-,B_{b}^-\|_{1,\alpha;[0,\frac12]},
   \end{aligned}
   \end{equation}
   where
    \begin{equation*}
 J_b^-=\rho_b^-u_b^-,\quad A_b^-=\frac{P_b}{(\rho_b^-)^{\gamma}}, \quad B_b^-=\frac12(u_b^-)^2+ \frac{\gamma P_b}{(\gamma-1)\rho_b^-}.
 \end{equation*}
 There exist positive constants $\sigma_1 $ and $ \mc^\ast $ depending only on  $ (\bm{U}_b^{-},L,\alpha) $ such that  if
   \begin{equation}\label{2-18}
   \begin{aligned}
  \sigma(J_{0},\nu_{0},A_{0},B_{0})\leq \sigma_1,
   \end{aligned}
   \end{equation}
  $\mathbf{Problem \ 2.2} $ has a unique   smooth axisymmetric subsonic solution  $ \bm{U}^- $   with the contact discontinuity $ \Gamma: r=g_{cd}(x) $ satisfying the following properties:
    \begin{enumerate}[\rm(i)]
\item The  axisymmetric subsonic solution  $ \bm{U}^-\in  C_{1,\alpha}^{(-\alpha,\Gamma)}(\mn^-) $  satisfies the following estimate:
   \begin{equation}\label{2-19}
 \|\bm{U}^- -\bm{U}_b^-\|_{1,\alpha;\mn^-}^{(-\alpha,\Gamma)}\leq \mc^\ast \sigma(J_{0},\nu_{0},A_{0},B_{0}).
\end{equation}
\item The contact discontinuity surface $ g_{cd}(x)\in C^{1,\alpha}([0,L])  $ satisfies
    $g_{cd}(0)=\frac12 $. Furthermore, it holds that
\begin{equation}\label{2-20}
 \|g_{cd}-\frac12 \|_{1,\alpha;[0,L]}\leq \mc^\ast\sigma(J_{0},\nu_{0},A_{0},B_{0}).
 \end{equation}
 \end{enumerate}
\end{theorem}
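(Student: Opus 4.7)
The plan is to follow the roadmap outlined in the introduction: first reduce the free boundary problem to a fixed domain by straightening the contact discontinuity, then decouple the hyperbolic and elliptic modes via the deformation-curl decomposition, solve a nonlinear fixed point problem for the inner flow assuming the contact discontinuity is prescribed, and finally invoke the implicit function theorem to pick the contact discontinuity so that the Rankine-Hugoniot pressure matching condition holds.

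First I would introduce the modified Lagrangian transformation $(x,r) \mapsto (y_1,y_2)$ adapted to the inner domain $\mn^-$, designed so that (i) the unknown contact discontinuity $r = g_{cd}(x)$ is mapped to the straight line $y_2 = \frac12$, (ii) the axis $r=0$ is mapped to $y_2=0$ in an invertible fashion even though the continuity equation contains the singular factor $r$, and (iii) the horizontal mass flux $J_0$ is propagated along streamlines in the spirit of \cite{WXX21}. In these new coordinates the entropy $A$, the Bernoulli quantity $B$, and the angular momentum $r u_\theta$ become functions of $y_2$ alone, which gives the hyperbolic part of the system an explicit closed form in terms of the entrance data.

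Next I would apply the deformation-curl decomposition of \cite{WX19,WS19} to rewrite the remaining velocity-pressure system as a second-order elliptic equation (obtained from $\div(\rho\bm u) = 0$ combined with Bernoulli's law) coupled with first-order transport equations carrying the vorticity information. Together with the slip condition on the axis, the exit condition $u_r^- = 0$, the entrance data for $J$, and the straightened boundary condition on $\{y_2 = \frac12\}$, this produces a mixed boundary value problem on a fixed rectangle. For any given candidate contact discontinuity $g_{cd}$ that is $C^{1,\alpha}$-close to the constant $\frac12$, I would linearize around the background $\bm U_b^-$, solve the linear elliptic problem in the weighted Hölder space $C^{(-\alpha,\Gamma)}_{1,\alpha}$ whose weight is designed to absorb the corner singularities where $\Gamma$ meets $\Gamma_0^-$ and $\Gamma_L$, and then close the nonlinear problem by contraction. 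This yields a solution operator $g_{cd} \mapsto \bm U^-[g_{cd}]$ with an estimate linear in $\sigma(J_0,\nu_0,A_0,B_0)$, and in particular a trace $P^-[g_{cd}]|_{r=g_{cd}(x)}$.

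Finally I would set up the implicit function theorem to locate $g_{cd}$: define the map
\begin{equation*}
\mf(g_{cd}, (J_0,\nu_0,A_0,B_0)) := P^-[g_{cd}, (J_0,\nu_0,A_0,B_0)]\big|_{r=g_{cd}(x)} - P_b
\end{equation*}
between suitable Hölder spaces. At the background point $(g_{cd}, (J_0,\nu_0,A_0,B_0)) = (\frac12, (J_b^-,0,A_b^-,B_b^-))$ the Fréchet derivative $\p_{g_{cd}} \mf$ is a nontrivial linear operator whose invertibility is ensured by the crucial choice that the outer layer is stagnant (so it remains frozen at $\bm U_b^+$) and $u_b^- > 0$, which produces a definite sign for the linearized pressure response to a perturbation of the free boundary. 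Invoking the implicit function theorem then solves $\mf = 0$ uniquely for $g_{cd} \in C^{1,\alpha}([0,L])$ as a $C^1$ function of the entrance perturbation, with $g_{cd}(0) = \frac12$ preserved by the compatibility built into the construction. Combining the estimate from the contraction step with the implicit function theorem estimate on $g_{cd}$ yields \eqref{2-19} and \eqref{2-20}; uniqueness follows from the uniqueness in both the contraction mapping and the implicit function theorem. The main obstacle I anticipate is the linear theory in the weighted space: one must obtain $C^{1,\alpha}$-estimates that are simultaneously compatible with the cylindrical singularity at $r=0$ and the corner singularities where $\Gamma$ meets the entrance and exit, and the choice of exponent $-\alpha$ with $\alpha > \frac12$, together with the specific form of the modified Lagrangian transformation, is precisely what makes these estimates close.
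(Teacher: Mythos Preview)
Your proposal is correct and follows essentially the same route as the paper: the modified Lagrangian transformation to fix the rectangle, the deformation-curl decomposition so that $A$, $B$, and $ru_\theta$ are transported along $y_2$, a contraction argument in the weighted space $C^{(-\alpha,\Sigma)}_{1,\alpha}$ for the remaining first-order elliptic system with $g_{cd}$ prescribed, and the implicit function theorem applied to the pressure-matching map to select $g_{cd}$. The one point to sharpen is the isomorphism of the linearization at the background: it is not a sign/monotonicity argument but rather the observation that, at $\bm U_b^-$, prescribing the pressure trace amounts to a Dirichlet condition $W_1^\ast|_{\Sigma}=-P^\ast/2$, and the resulting mixed boundary value problem (rewritten via a potential and lifted to three dimensions to kill the axis singularity) is uniquely solvable, which produces the preimage $w^\ast=u_b^-\,W_2^\ast|_{\Sigma}$.
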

\begin{remark}
\par  There are several differences between  our result and the  previous work \cite{PB19}.  The first one is that the boundary conditions imposed on the entrance and exit of   the   cylinder. We prescribe the boundary data    for  the horizontal mass distribution,  the swirl velocity, the entropy  and the Bernoulli's quantity at the entrance,  while  \cite{PB19} prescribes the entropy, the swirl velocity and the radial velocity. The second one is that the  decomposition of the axisymmetric Euler system. In \cite{PB19}, the Helmholtz decomposition of the velocity field plays a crucial role. Instead,  we  utilize the deformation-curl decomposition developed in \cite{WX19} for steady Euler system to effectively decouple the hyperbolic and elliptic modes.
The last one is that the approach to locate contact discontinuity. The contact discontinuity in \cite{PB19} is determined by an ordinary differential equation arising from the Rankine-Hugoniot conditions. In this paper, we employ the implicit function
theorem to locate the contact discontinuity.
\end{remark}
\begin{remark}
In $\mathbf{Problem \ 2.2}$,  by fixing the outer-layer flow in $ \mn^+ $ as the background flow $ (\rho_b^{+},0,0,0,P_b) $, we seek a  smooth axisymmetric  subsonic solution $\bm{U}^- $  defined on $ \mn^- $  with the contact discontinuity $ \Gamma: r=g_{cd}(x) $. One can also find a smooth axisymmetric subsonic solution $\bm{U}^+ $  defined on $ \mn^+ $  with the contact discontinuity $ \Gamma: r=g_{cd}(x) $ by fixing the inner-layer flow in $ \mn^- $ as the background flow $ (\rho_b^{-},0,0,0,P_b) $. In fact, this case is even simpler than $\mathbf{Problem \ 2.2}$ since the  singularity near the axis is not needed to be considered. Thus we can introduce the usual Lagrangian transformation and  reduce the axisymmetric Euler system to a second order elliptic equation
for the stream function as  in \cite{WZ23} to obtain the solution $\bm{U}^+ $.
\end{remark}
\section{The reformulation of Problem 2.2}\noindent
\par In this section, we first introduce the modified Lagrange transformation to straighten the contact discontinuity and reformulate the free boundary value    problem 2.2. Then   the deformation-curl decomposition in \cite{WX19,WS19} is employed  to derive an equivalent system.  Finally, we state  the main steps to solve the free boundary problem 3.1.
\subsection{Reformulation by the modified Lagrangian transformation}\noindent
\par For steady Euler flows, the main advantage of the Euler-Lagrange coordinate transformation is to straighten the stream lines. However, in the three-dimensional axisymmetric setting,  there is a singular term $ r $ in the density equation.  We introduce the modified Lagrange transformation to overcome this difficulty and apply this modified Lagrange transformation to straighten the contact discontinuity.
\par  Let $ ( \bm{U}^-(x,r), g_{cd}(x)) $ be a solution to  $ \mathbf{Problem \ 2.2}$.
 Define
 \begin{equation}\label{3-1}
 m^2=\int_{0}^{\frac{1}{2}}sJ_{0}(s)\de s.
  \end{equation}
    For any $x\in( 0,L)$, it follows from the first equation in \eqref{2-2} that
 \begin{equation}\label{3-2}
 \int_{0}^{g_{cd}(x)}s\rho^- u_{x}^-(x,s)\de s=m^2.
 \end{equation}
  \par  Define the modified Lagrangian transformation   as
  \begin{equation}\label{3-4}
y_1=x,\quad  y_2(x,r)= \left(\int_{0}^{r}s\rho^- u_{x}^-(x,s)\de. s\right)^{\frac12}.
  \end{equation}
 Note that if $ (\rho^-,u_x^-, u_r^-,u_{\th}^-) $ is close to the background solutions
$ (\rho_b^-,u_b^-,0,0) $,  there exist two positive constants $ C_1 $ and $ C_2  $, depending on the background solution, such that
\begin{equation*}
C_1r^2\leq  \int_{0}^{r}s\rho^- u_x^-(x,s)\de s\leq C_2r^2.
\end{equation*}
Hence the Jacobian of the modified  Lagrange transformation satisfies
 \begin{equation}\label{3-5}
\frac{\p(y_1,y_2)}{\p(x,r)}=\left|
\begin{matrix} 1& 0 \\ -\frac{r\rho^- u_r^-}{2 y_2} & \frac{r\rho^- u_x^-}{2  y_2} \end{matrix}\right|=\frac{r\rho^- u_x^-}{2   y_2}\geq C_3>0.
\end{equation}
That is invertible.
\par Under this transformation,  the domain $ \mn^- $ becomes
\begin{equation*}
\Omega:=\{(y_1,y_2): 0<y_1<L,\ 0<y_2<m\}.
\end{equation*}
The entrance and exit  of $ \Omega $ are defined as
\begin{equation*}
\begin{aligned}
\Sigma_0:=\{(y_1,y_2):  y_1=0,\ 0<y_2<m\},\quad
\Sigma_L:=\{(y_1,y_2):  y_1=L,\ 0<y_2<m\}.
\end{aligned}
\end{equation*}
 The axis $ \Gamma_a $ is  transformed  into
\begin{equation*}
\begin{aligned}
\Sigma_a:=\{(y_1,y_2): 0<y_1<L,\ y_2=0\}.
\end{aligned}
\end{equation*}
Moreover, on $ \Gamma $, one has
\begin{equation*}
y_2(x,g_{cd}(x))=\left(\int_{0}^{g_{cd}(x)}s\rho^- u_{x}^-(x,s)\de s
\right)^{\frac12}=m.
\end{equation*}
Hence the free boundary $\Gamma $ becomes the following fixed straight line
\begin{equation}\label{3-6}
\Sigma:=\{(y_1,y_2):0<y_1<L,\ y_2=m\}.
\end{equation}
\par In the following,  the superscript ``-" in $\rho^-,u_x^-,u_{r}^-,u_{\th}^-, P^- $ will be ignored to  simplify the notations.  Let
\begin{equation}\label{3-7}
{\bm{U}}^-(y_1,y_2):=( \rho,u_x,u_{r},u_{\th}, P)(y_1,r(y_1,y_2))  \quad {\rm{in}}\quad  \Omega.
 \end{equation}
 Then the  axisymmetric Euler system \eqref{2-2} in the new coordinates  can be rewritten as
 \begin{equation}\label{3-8}
  \begin{cases}
  \begin{aligned}
  &\p_{y_1}\left(\frac{2y_2}{r\rho u_x}\right)-\p_{y_2}\left(\frac{u_r}{u_x}\right)=0,\\
  &\p_{y_1}\left(u_x+\frac{P}{\rho u_x}\right)-\frac{r}{2y_2}\p_{y_2}\left(\frac{Pu_r}{u_x}\right)-\frac{Pu_r}{r\rho u_x^2}=0,\\
  &\p_{y_1}u_r+\frac{r}{2y_2}\p_{y_2}P-\frac{u_{\th}^2}{ru_x}=0,\\
  &\p_{y_1}(ru_\theta)=0,\\
  &\p_{y_1}A=0.\\
  \end{aligned}
  \end{cases}
  \end{equation}
   The background solutions in the  Lagrange coordinates are
\begin{equation}\label{3-9}
 {\bm{U}}_b^{-}:=(\rho_b^{-},u_b^{-},0,0,P_b),  \quad {\rm{in}}\quad \Omega_b,
  \end{equation}
  where
 $\Omega_b:=\{(y_1,y_2): 0<y_1<L,\ 0<y_2<m_b^-\}$
and $ m_b^-=\sqrt{\frac{\rho_b^- u_b^{-}}{8}}$. Without loss of generality, we assume that
\begin{equation*}
 \rho_b^- u_b^{-}=2  \quad {\rm{and}} \quad  m=m_b^-=\frac12.
\end{equation*}
\par Furthermore, under the modified  Lagrangian transformation, $ r $ as a function of $(y_1,y_2) $ becomes nonlinear and nonlocal in the new coordinates. In fact, it follows from the inverse transformation that
\begin{equation*}
\begin{aligned}
  \frac{\p r}{\p y_1}=\frac{u_r}{u_x},\quad  \frac{\p r}{\p y_2}=\frac{2 y_2}{r\rho u_x}, \quad r(y_1,0)=0. \\
 \end{aligned}
  \end{equation*}
  Thus one derives
\begin{equation}\label{3-10}
r(y_1,y_2)=\left(2\int_{0}^{y_2}\frac{2s}{ \rho u_x(y_1,s)}\de s\right)^{\frac{1}{2}}, \ {\rm{in}}\quad \Omega.\\
\end{equation}
In particular, for the background solutions $ (\rho_b^-,u_b^-,0,0) $, one has
\begin{equation}\label{3-11}
r_b(y_2)
=\sqrt{{\frac{2}{\rho_b^- u_b^-}}}y_2=y_2, \ {\rm{in}}\quad  \Omega.
\end{equation}
\par In the new coordinates,  the boundary data \eqref{2-7} at the entrance is given by
 \begin{equation}\label{3-12}
(J,u_{\th},A,B)(0,y_2)=
(\e J_{0},\e \nu_{0},\e A_{0},\e B_{0})(y_2),\quad {\rm{on}}\quad \Sigma_0,
 \end{equation}
 where
 \begin{equation*}
   (\e J_{0},\e \nu_{0},\e A_{0},\e B_{0})(y_2)=( J_{0}, \nu_{0},A_{0}, B_{0})\left(\left(\int_{0}^{y_2}\frac{2s}{ J_{0}(s)}\de s\right)^{\frac{1}{2}}\right).
\end{equation*}
 The Rankine-Hugoniot conditions in \eqref{2-14}  become
\begin{equation}\label{3-13}
\frac{ u_r}{ u_x}(y_1,\frac12)=g_{cd}^{\prime}(y_1),
\end{equation}
and
\begin{equation}\label{3-14}
 P(y_1,\frac12)=P_b.
\end{equation}
\subsection{The deformation-curl decomposition for  axisymmetric Euler system}\noindent
\par  It is well-known that the steady Euler system is  elliptic-hyperbolic coupled in subsonic region, to construct a well-defined iteration scheme, one should decompose the hyperbolic and elliptic modes effectively. Different from the pervious decomposition in \cite{WZ23}, we will employ the deformation-curl decomposition developed in \cite{WX19,WS19} to deal with the elliptic-hyperbolic coupled structure in the axisymmetric  Euler system.
\par First, using the  Bernoulli's function,  the density $ \rho $ can be  represented as
\begin{equation}\label{3-15}
\rho=H(B,A,|\textbf{u}|^2)=
\left(\frac{\gamma-1}{A\gamma}(B-\frac{1}{2}|\textbf{u}|^2)\right)
^{\frac{1}{\gamma-1}}.
\end{equation}
Define the vorticity $ \omega=\curl \textbf{u}=\omega_x \mathbf{e}_x + \omega_r \mathbf{e}_r + \omega_{\theta} \mathbf{e}_{\theta}$, where
 \begin{equation*}
 \omega_x= \frac{1}{r}\p_r(ru_{\th} ),\ \omega_r=-\p_x u_{\th},\ \omega_{\theta}= \p_x u_{r}-\p_r u_{x}.
 \end{equation*}
 From the third equation in \eqref{2-2} and the Bernoulli's law, one derives that
\begin{equation}\label{3-16}
\omega_{\theta}= \frac{u_{\th}\p_ru_{\th}+\frac{u_{\th}^2}{r}+\frac{ (B-\frac{1}{2}|{\bf u}|^2)}{A\gamma}\p_r A-\p_rB}{u_{x}}.
\end{equation}
Substituting \eqref{3-15} into the density equation in \eqref{2-2}, the axisymmetric Euler system \eqref{2-2}  is  equivalent to the following system:
\begin{equation}\label{3-17}
\begin{cases}
\begin{aligned}
&(c^2(H,A)-u_x^2)\p_x u_x+(c^2(H,A)-u_r^2)\p_r u_x-u_x u_r(\p_xu_r+\p_ru_x)+u_r\frac{c^2(H,A)+u_{\th}^2}{r}=0,\\
&u_x(\p_xu_r-\p_ru_x)=u_{\th}\p_ru_{\th}+\frac{u_{\th}^2}{r}+\frac{ (B-\frac{1}{2}|{\bf u}|^2)}{A\gamma}\p_r A-\p_rB,\\
&(u_x\p_x+u_r\p_r)(ru_\theta)=0,\\
&(u_x\p_x+u_r\p_r)A=0,\\
&(u_x\p_x+u_r\p_r)B=0.\\
\end{aligned}
\end{cases}
\end{equation}
\par  Under the  transformation  \eqref{3-4},
  $ A $ and $ B $ satisfy the following transport equations:
\begin{equation}\label{3-18}
\begin{cases}
\begin{aligned}
&\p_{y_1}A=0,\\
&\p_{y_1}B=0.\\
\end{aligned}
\end{cases}
\end{equation}
Thus one has
\begin{equation}\label{3-19}
A(y_1,y_2)=\e A_{0}(y_2),  \quad B(y_1,y_2)=\e B_{0}(y_2).
\end{equation}
Next, it follows from \eqref{3-4} and \eqref{3-19} that $ u_x $, $ u_r $ and $u_\theta$ satisfy the following system:
\begin{equation}\label{3-20}
\begin{cases}
\begin{aligned}
&(c^2(\rho,\e A_{0})-u_x^2)\left(\p_{y_1}u_x-\frac{r\rho u_r}{2y_2}\p_{y_2}u_x\right)
+(c^2(\rho,\e A_{0})-u_r^2)\left(\frac{r\rho u_x}{2y_2}\p_{y_2}u_r\right)\\
&+\frac{c^2(\rho,\e A_{0})}{r}u_r+\frac{u_\theta^2}{r}u_r=
u_xu_r\left(\p_{y_1}u_r-\frac{r\rho u_r}{2y_2}\p_{y_2}u_r+\frac{r\rho u_x}{2y_2}\p_{y_2}u_x\right), \\
&u_x\left(\p_{y_1}u_r-\frac{r\rho u_r}{2y_2}\p_{y_2}u_r-\frac{r\rho u_x}{2y_2}\p_{y_2}u_x\right)\\
&=\frac{r\rho u_x}{2y_2}u_\theta\p_{y_2}u_\theta
+\frac{u_\theta^2}{r}+
\frac{r\rho u_x}{2y_2}\frac{\rho^{\gamma-1}}{\gamma-1}\p_{y_2}\e A_{0}-\frac{r\rho u_x}{2y_2}\p_{y_2}\e B_{0}, \\
&\p_{y_1}(ru_\theta)=0,\\
\end{aligned}
\end{cases}
\end{equation}
with the following boundary conditions:
\begin{equation}\label{3-21}
\begin{cases}
\rho u_x(0,y_2 )=\e J_0(y_2), \ u_\theta(0,y_2)=\e \nu_{0}(y_2),\quad &{\rm{on}}\quad \Sigma_0,\\
u_r(L,y_2)=0, \quad &{\rm{on}}\quad \Sigma_L,\\
u_r(y_1,\frac12)=u_x(y_1,\frac12)g_{cd}^\prime(y_1),\quad &{\rm{on}}\quad \Sigma,\\
u_r(y_1,0)=0, \quad &{\rm{on}}\quad \Sigma_a.\\
\end{cases}
\end{equation}
Furthermore, by \eqref{3-14}, one obtains
\begin{equation}\label{3-22}
 \e A_{0}\left(\frac12\right)\left(\rho(u_x,u_r,u_\theta, \e A_{0},\e B_{0}) \right)^\gamma(y_1,\frac12)=P_b.
\end{equation}
 \par Therefore  $\mathbf{Problem \ 2.2}$ is reformulated as follows.
 \begin{problem}
  Given  functions $ (J_{0}, \nu_{0},A_{0}, B_{0})$ at the entrance satisfying \eqref{2-8}, find a unique  smooth   subsonic solution $ (u_x,u_r,u_\theta;g_{cd}) $ satisfying \eqref{3-20} and \eqref{3-21} and  the Rankine-Hugoniot condition  \eqref{3-22}.
  \end{problem}
  \subsection{Solving the free boundary problem 3.1}\noindent
\par   Note that $\mathbf{Problem \ 3.1}$ is a free boundary problem since the function $ g_{cd} $ is unknown,  this free
boundary problem will be solved by using the implicit function theorem. We follow the steps below to solve $\mathbf{Problem \ 3.1}$:
 \begin{enumerate}[ \bf (a)]
 \item
 Given any function $ g_{cd}(y_1)=\int_{0}^{y_1}w(s)\de s+\frac12 $ belonging to some suitable function classes, we  will solve the nonlinear system \eqref{3-20} with  mixed boundary condition \eqref{3-21} in $ \Omega $. This will be achieved by decomposing the system \eqref{3-20} into two boundary value problems with different inhomogeneous terms  and employing the standard elliptic theory. The detailed analysis will be given in Section 4.
 \item We use  the implicit function theorem to locate the contact discontinuity. More precisely, define the map $  \mq(\bm \varphi_0,w):= \e A_{0}\left(\frac12\right)\left(\rho(u_x,u_r,u_\theta, \e A_{0},\e B_{0}) \right)^\gamma(y_1,\frac12)-P_b $, we need to  compute   the Fr\'{e}chet derivative $ D_w\mq(\bm \varphi_0,w)$ of the functional $ \mq(\bm \varphi_0,w)$ with respect to $ w $ and show that $D_w\mq(\bm \varphi_b,0) $ is an  isomorphism. This step will be achieved in  Section 5.
           \end{enumerate}
\section{The  solution to a fixed boundary value problem in $ \Omega$}\noindent
\par In this section,  given any function $ g_{cd}(y_1)=\int_{0}^{y_1}w(s)\de s+\frac12 \in C^{1,\alpha}([0,L])$ satisfying $ g_{cd}^\prime(L)=0 $,
 we  will solve the nonlinear  system \eqref{3-20} with  mixed boundary condition \eqref{3-21} in $\Omega $.
 \subsection{Linearization}\noindent
 \par To solve nonlinear system  \eqref{3-20} in the  domain $ \Omega $, we first linearize \eqref{3-20} and then solve
the linear system in the  domain $ \Omega $.
  Define
\begin{equation*}
W_1= u_x-u_b^-, \quad W_2=u_r,\quad W_3=u_\theta,\quad (\h J,\h A,\h B)=(\e J_0,\e A_{0},\e B_{0})-(J_b^-, A_{b}^-,B_{b}^-).
\end{equation*}
Denoting the solution space by $\mj(\delta_1)$, which is defined as
\begin{equation}\label{4-1}
\mj(\delta_1)= \{{\bf W}=(W_1,W_2,W_3): \sum_{j=1}^3 \|W_j\|_{1,\alpha;\Omega}^{(-\alpha,\Sigma)}\leq \delta_1, \ W_2(y_1,0)=W_2(L,y_2)=W_3(y_1,0)=0\}.
\end{equation}
Here $ \delta_1 $ is a positive constant to be determined later.
 \par Given $\h{{\bf W}}\in \mj(\delta_1) $, it follows from the third equation in \eqref{3-19} that $W_3$ can be solved as follows
  \begin{equation}\label{4-2}
  W_3(y_1,y_2)=\frac{\h \Lambda(y_2)}{\h r(y_1,y_2)},
  \end{equation}
  where
  \begin{equation*}
  \begin{aligned}
 & \h \Lambda(y_2)=r(0,y_2)\e \nu_{0}(y_2)=\left(2\int_{0}^{y_2}\frac{2s}{ \e J_0 (s)}\de s\right)^{\frac{1}{2}}\e \nu_{0}(y_2),\\
 & \h r(y_1,y_2)=\left(2\int_{0}^{y_2}\frac{2s}{ \hat\rho(\h W_1+u_b^-,\h W_2,\h W_3, \e A_{0},\e B_{0})(\h W_1+u_b^-)(y_1,s)}\de s\right)^{\frac{1}{2}}.
  \end{aligned}
  \end{equation*}
  Then one derives that
  \begin{equation}\label{4-a}
   \|W_3\|_{1,\alpha;\Omega}^{(-\alpha,\Sigma)}\leq C\sigma_{cd},
   \end{equation}
   where $\sigma_{cd}=\sigma(J_{0},\nu_{0},A_{0},B_{0})$  is  defined in \eqref{2-17} and $ C>0 $ depends only on  $ (\bm{U}_b^{-},L,\alpha) $.
  \par In the following, we turn to  concern   $ W_1 $ and  $ W_2 $. It follows from the first and second equations in  \eqref{3-20}
that $ W_1 $ and  $ W_2 $ satisfy the following first order elliptic system:
\begin{equation}\label{4-3}
\begin{cases}
\begin{aligned}
&(c^2(\rho_b^-, A_{b}^-)-(u_b^-)^2)\p_{y_1}W_1
+c^2(\rho_b^-, A_{b}^-)\p_{y_2}W_2\\
&+\frac{c^2(\rho_b^-,A_{b}^-)}{r_b}W_2=
F_1(\h{{\bf W}},\n{\h{\bf W}},\hat A,\hat B), \\
&\p_{y_1}W_2-\frac{r_b\rho_b^- u_b^-}{2y_2}\p_{y_2}W_1
=F_2(\h{{\bf W}},\n{\h{\bf W}},\hat A,\hat B), \\
\end{aligned}
\end{cases}
\end{equation}
where
\begin{equation*}
\begin{aligned}
&F_1(\h{{\bf W}},\n{\h{\bf W}},\h A,\h B)\\
&=(c^2(\h \rho,\e A_0)-(u_b^-+\h{W}_1)^2)\frac{\h r\h\rho \h{W}_2}{2y_2}\p_{y_2}\h{W}_1+\h{W}_2
^2\frac{\h r\h\rho (u_b^-+\h{W}_1)}{2y_2}\p_{y_2}\h{W}_2
-\left(\frac{c^2(\h\rho,\e A_{0})}{\hat r}-\frac{c^2(\rho_b^-, A_b^-)}{r_b}\right)\h{W}_2\\
&\quad-
\left((\gamma-1)\h B-\frac{\gamma+1}{2}\h{W}_1^2
-\frac{\gamma-1}{2}\h{W}_2^2-
\frac{\gamma-1}{2}\h{W}_3^2-(\gamma+1)u_b^-\h{W}_1\right)
\p_{y_1}\h{W}_1\\
&\quad-
\left((\gamma-1)\h B-\frac{\gamma-1}{2}\h{W}_1^2
-\frac{\gamma-1}{2}\h{W}_2^2-
\frac{\gamma-1}{2}\h{W}_3^2-(\gamma-1)u_b^-\h{W}_1\right)
\p_{y_2}\h{W}_2\\
&\quad-\frac{\h W_3^2}{\h r}
\h{W}_2+(\h{W}_1+u_b^-)\h{W}_2\left(\p_{y _1}\h{W}_2-\frac{\h r\h\rho \h{W}_2}{2y_2}\p_{y_2}\h{W}_2+\frac{\h r\h\rho (\h{W}_1+u_b^-)}{2y_2}\p_{y_2}\h{W}_1\right),\\
&F_2(\h{{\bf W}},\n{\h{\bf W}},\h A,\h B)\\
&=\frac{1}{u_b^-+\h{W}_1}\left(\frac{
\h r\h\rho (\h{W}_1+u_b^-)}{2y_2}\h W_3\p_{y_2}
\h W_3
+\frac{\h W_3^2}{\h r}+
\frac{\h r\h\rho (\h{W}_1+u_b^-)}{2y_2}\frac{\h\rho^{\gamma-1}}{\gamma-1}\p_{y_2}\h A-\frac{\h r \rho (\h{W}_1+u_b^-)}{2y_2}\p_{y_2}\h B\right)\\
&\quad+\frac{1}{u_b^-+\h{W}_1}\left(\frac{\h r\h\rho \h{W}_2}{2y_2}\p_{y_2}\h{W}_2\right).\\
\end{aligned}
\end{equation*}
 Recalling that $ \p_r(A_{0}, {B}_{0})(0)=0  $, then one obtains
\begin{equation}\label{4-b}
\p_{y_2}(\h A, \h B)(0)=\p_{y_2}(\e A_0, \e B_0)(0)=0.
\end{equation}
Thus for  $\h{{\bf W}}\in \mj(\delta_1) $, it is easy to check that $ F_2(y_1,0)=0 $.
Furthermore, there exist positive constants $ \kappa_1 $ and $ \kappa_2 $ depending only on the background solutions $ \bm{U}_b^{-} $ such that
\begin{equation*}
\kappa_1y_2\leq \h r\leq \kappa_2y_2,
\end{equation*}
which implies that
\begin{equation}\label{4-c}
\kappa_1\leq \frac{\h r}{y_2}\leq \kappa_2.
\end{equation}
Hence one can derive
\begin{equation}\label{4-d}
 \begin{aligned}
 \sum_{j=1}^2 \|F_j\|_{0,\alpha;\Omega}^{(1-\alpha,\Sigma)}
    \leq C\left(\bigg(\sum_{j=1}^3 \|\h {W_j}\|_{1,\alpha;\Omega}^{(-\alpha,\Sigma)}\bigg)^2
    + \sigma_{cd}\right)
   \leq C\left(\delta_1^2
+\sigma_{cd}\right),
    \end{aligned}
    \end{equation}
  where $ C>0 $ depends only on  $ (\bm{U}_b^{-},L,\alpha) $.
\par Next, we derive the boundary conditions for  ${\bf W}$. It follows from \eqref{3-20} that
 \begin{equation}\label{4-4}
\begin{cases}
 W_1(0,y_2)=F_3(\h{{\bf W}},\h J,\h A,\h B)(0,y_2), \quad &{\rm{on}}\quad \Sigma_0,\\
W_2(L,y_2)=0, \quad &{\rm{on}}\quad \Sigma_L,\\
W_2(y_1,\frac12)=F_4(\h{{\bf W}},g_{cd})(y_1,\frac12),\quad &{\rm{on}}\quad \Sigma,\\
W_2(y_1,0)=0, \quad &{\rm{on}}\quad \Sigma_a,\\
\end{cases}
\end{equation}
where
\begin{equation*}
\begin{aligned}
&F_3(\h{{\bf W}},\h J,\h A,\h B)\\
&=\frac{\h J} {\rho_b^-(1-(M_b^-)^2)}- \frac{\h{W}_1}{\rho_b^- (1-(M_b^-)^2)}\bigg(H(\h B+B_b^-,\h A+A_b^-,(u_b^-+\h{W}_1)^2+\h{W}_2^2+
\h{W}_3^2)\\
&\qquad\qquad\qquad\qquad\qquad\qquad\qquad\qquad\quad- H(B_b^-,A_b^-,(u_b^-)^2)\bigg) \\
&\quad - \frac{u_b^-}{\rho_b^- (1-(M_b^-)^2)}\left(H(\h B+B_b^-,\h A+A_b^-,(u_b^-+\h{W}_1)^2+\h{W}_2^2+\h{W}_3^2)
\right.\\
&\qquad\qquad\qquad\qquad\qquad\left.- H(B_b^-,A_b^-,(u_b^-)^2)+ \frac{J_b^-}{c^2(\rho_b^-,A_b^-)} \h{W}_1\right),\\
& (M_b^-)^2=\frac{(u_b^-)^2}{c^2(\rho_b^-,A_b^-)}, \quad F_4(\h{{\bf W}},g_{cd})=
(u_b^-+\h{W}_1)g_{cd}^\prime.
\end{aligned}
\end{equation*}
 Then a direct computation yields
 \begin{equation}\label{4-5}
 \begin{aligned}
 &\|F_3\|_{1,\alpha;[0,\frac12)}^{(-\alpha,\{\frac12\})}+ \|F_4\|_{0,\alpha;[0,L]}\\
    &\leq C\left(\bigg(\sum_{j=1}^3 \|\h {W_j}\|_{1,\alpha;\Omega}^{(-\alpha,\Sigma)}\bigg)^2
    +\|\h {W_1}\|_{1,\alpha;\Omega}^{(-\alpha,\Sigma)}\|g_{cd}^\prime\|_{0,\alpha;[0,L]}
    +\|g_{cd}^\prime\|_{0,\alpha;[0,L]}+ \sigma_{cd}\right)\\
   &\leq C\left(\delta_1^2+\delta_1\|w\|_{0,\alpha;[0,L]}
   +\|w\|_{0,\alpha;[0,L]}
+\sigma_{cd}\right),
    \end{aligned}
    \end{equation}
     where $ C>0 $ depends only on  $ (\bm{U}_b^{-},L,\alpha) $.
     \subsection{Solving the linear boundary value problem  }\noindent
  \par In this subsection, we consider the following linear boundary value problem:
  \begin{equation}\label{4-6}
\begin{cases}
\begin{aligned}
&(1-(M_b^-)^2)\p_{y_1}W_1
+\p_{y_2}W_2
+\frac{1}{y_2}W_2=
\mf_1(\h{{\bf W}},\n{\h{\bf W}},\h A,\h B), \\
&\p_{y_1}W_2-\p_{y_2}W_1
=\mf_2(\h{{\bf W}},\n{\h{\bf W}},\h A,\h B), \\
 &W_1(0,y_2)=\mf_3(\h{{\bf W}},\h J,\h\Lambda,\h A,\h B)(0,y_2), \quad &{\rm{on}}\quad \Sigma_0,\\
&W_2(L,y_2)=0, \quad &{\rm{on}}\quad \Sigma_L,\\
&W_2(y_1,\frac12)=\mf_4(\h{{\bf W}},g_{cd})(y_1,\frac12),\quad &{\rm{on}}\quad \Sigma,\\
&W_2(y_1,0)=0, \quad &{\rm{on}}\quad \Sigma_a,\\
\end{aligned}
\end{cases}
\end{equation}
where
\begin{equation*}
\mf_1=\frac{1}{c^2(\rho_b^-,A_b^-)}F_1, \quad \mf_i=F_i, i=2,3,4.
\end{equation*}
\par For the   problem \eqref{4-6}, we have the following conclusion:
  \begin{lemma}
  Let $ \alpha\in (\frac12,1) $. For given $ \mf_j \in  C_{0,\alpha}^{(1-\alpha,\Sigma)}(\Omega) $, $ j=1,2 $, $ \mf_2(y_1,0)=0 $, $ \mf_3 \in  C_{1,\alpha}^{(-\alpha,\{\frac12\})}([0,\frac12)) $, $ \mf_4 \in  C^{0,\alpha}([0,L]) $, the boundary value problem \eqref{4-6} has a unique solution $ (W_1,W_2) \in \left( C_{1,\alpha}^{(-\alpha,\Sigma)}(\Omega)\right)^2 $ satisfying
  \begin{equation}\label{4-7}
  \sum_{j=1}^2 \|W_j\|_{1,\alpha;\Omega}^{(-\alpha,\Sigma)}\leq
  C\left(\sum_{j=1}^2 \|\mf_j\|_{0,\alpha;\Omega}^{(1-\alpha,\Sigma)}+
    \|\mf_3\|_{1,\alpha;[0,\frac12)}^{(-\alpha,\{\frac12\})}+ \|\mf_4\|_{0,\alpha;[0,L]}\right),
    \end{equation}
 where $ C>0 $ depends only on  $ (\bm{U}_b^{-},L,\alpha) $.
 \end{lemma}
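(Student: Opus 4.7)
The plan is to reduce the first-order system \eqref{4-6} to a single second-order scalar elliptic boundary value problem for $W_1$, solve it in the weighted H\"{o}lder space, and then recover $W_2$ from one of the original equations. Differentiating the first PDE in \eqref{4-6} with respect to $y_1$ and using the second PDE in the form $\p_{y_1}\p_{y_2}W_2 = \p_{y_2}^2 W_1 + \p_{y_2}\mf_2$ and $\p_{y_1}W_2 = \p_{y_2}W_1 + \mf_2$, one obtains
\begin{equation*}
(1-(M_b^-)^2)\p_{y_1}^2 W_1 + \p_{y_2}^2 W_1 + \frac{1}{y_2}\p_{y_2} W_1 = \p_{y_1}\mf_1 - \p_{y_2}\mf_2 - \frac{1}{y_2}\mf_2 =: \e{\mf}.
\end{equation*}
The four first-order boundary conditions translate into scalar ones for $W_1$: Dirichlet $W_1 = \mf_3$ on $\Sigma_0$; Neumann $(1-(M_b^-)^2)\p_{y_1} W_1 = \mf_1$ on $\Sigma_L$ (from the first PDE together with $W_2|_{\Sigma_L}=0$); Neumann $\p_{y_2}W_1 = \mf_4' - \mf_2(\cdot,\tfrac12)$ on $\Sigma$ (from differentiating $W_2|_\Sigma = \mf_4$ in $y_1$ and using the second PDE); and the axis condition $\p_{y_2}W_1 = 0$ on $\Sigma_a$ (from $W_2|_{\Sigma_a} = \mf_2|_{\Sigma_a} = 0$).

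To treat the singular coefficient $1/y_2$ at $\Sigma_a$, I would interpret $(y_1, y_2)$ as cylindrical coordinates in three dimensions, writing $y_2 = |z|$ for $z \in \mathbb{R}^2$, so that $W_1$ lifts to an axisymmetric function on the smooth cylindrical domain $(0,L)\times B_{1/2}$ satisfying the constant-coefficient uniformly elliptic equation $(1-(M_b^-)^2)\p_{y_1}^2 W_1 + \Delta_z W_1 = \e{\mf}$. In this picture $\Sigma_a$ becomes an interior axis of symmetry (no actual boundary), and the axis condition $\p_{y_2}W_1=0$ is the automatic smoothness requirement for axisymmetric functions. Existence and uniqueness of a weak solution to the resulting mixed Dirichlet--Neumann BVP follow from Lax--Milgram, and interior and boundary Schauder estimates away from the edges yield $C^{1,\alpha}$ regularity on every compact subset disjoint from $\{y_1=0,L\}\cap\Sigma$.

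The main technical obstacle is the weighted Schauder estimate near the two edges where boundary conditions of different types meet: the Dirichlet--Neumann corner $\{y_1=0\}\cap\Sigma$ and the Neumann--Neumann corner $\{y_1=L\}\cap\Sigma$. A mixed BVP may develop a corner singularity of order $r^{1/2}$ (with $r$ the distance to the edge), and the weight $\delta_{\bm x}^{\max(|\beta|-\alpha,0)}$ at $\Sigma$ with $\alpha \in (\tfrac12, 1)$ is chosen precisely so that the admissible blow-up $|DW_1| \lesssim \delta^{\alpha-1}$ is strictly weaker than the critical $r^{-1/2}$; the desired bound \eqref{4-7} then follows from the standard blow-up/rescaling argument combining interior Schauder estimates with the scaling homogeneity of the constant-coefficient operator.

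Having solved for $W_1$, I would recover $W_2$ by integrating the first PDE of \eqref{4-6} rewritten in conservation form $\p_{y_2}(y_2 W_2) = y_2\bigl[\mf_1 - (1-(M_b^-)^2)\p_{y_1}W_1\bigr]$ from $y_2=0$, obtaining
\begin{equation*}
W_2(y_1, y_2) = \frac{1}{y_2}\int_0^{y_2} s\bigl[\mf_1 - (1-(M_b^-)^2)\p_{y_1}W_1\bigr](y_1, s)\,\de s.
\end{equation*}
This representation enforces $W_2|_{\Sigma_a} = 0$ automatically and, by the derived Neumann condition on $\Sigma_L$, also $W_2|_{\Sigma_L} = 0$; the condition $W_2|_\Sigma = \mf_4$ follows from the derived Neumann condition at $\Sigma$ together with the compatibility $\mf_4(L)=0$ implicit in the setup (since $g_{cd}'(L)=0$). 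The estimate \eqref{4-7} for $W_2$ then inherits from that for $W_1$ via this formula, and uniqueness reduces to the standard energy argument applied to the homogeneous second-order problem.
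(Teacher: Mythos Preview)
Your reduction to a second-order equation for $W_1$ has a regularity gap that the paper's approach is designed to avoid. You differentiate the first equation of \eqref{4-6} in $y_1$, producing $\partial_{y_1}\mf_1$ on the right-hand side, and you differentiate the boundary condition $W_2|_\Sigma=\mf_4$ in $y_1$ to obtain the Neumann condition $\partial_{y_2}W_1=\mf_4'-\mf_2(\cdot,\tfrac12)$ on $\Sigma$. But the hypotheses only give $\mf_1\in C_{0,\alpha}^{(1-\alpha,\Sigma)}(\Omega)$ and $\mf_4\in C^{0,\alpha}([0,L])$: neither is differentiable. The interior term $\partial_{y_1}\mf_1-\partial_{y_2}\mf_2-\tfrac{1}{y_2}\mf_2$ can be salvaged by writing it in divergence form after the three-dimensional lift and using divergence-form Schauder theory, but the boundary datum $\mf_4'$ on $\Sigma$ is genuinely problematic: standard Schauder estimates for an oblique/Neumann condition require the boundary datum in $C^{0,\alpha}$, and here it does not even exist as a function. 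Any weak reinterpretation via integration by parts along $\Sigma$ would have to be carried through the corner blow-up argument, which you have not done.

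The paper sidesteps this loss of a derivative by going in the opposite direction: instead of differentiating the system, it \emph{integrates} it via potentials. After a rescaling and a splitting $(V_1,V_2)=(H_1,H_2)+(K_1,K_2)$, the divergence-free part is represented by a stream function $\Phi_1$ and the curl-free part by a velocity potential $\phi_2$. The second-order equations for $\Phi_1$ and $\phi_2$ then carry the data $\mf_1,\mf_2,\mf_3,\mf_4$ \emph{undifferentiated} (for instance, $W_2|_\Sigma=\mf_4$ becomes the Neumann condition $\partial_{z_2}\phi_2=\mf_4$), so $C^{0,\alpha}$ data yield $C^{2,\alpha}$ weighted estimates on the potentials and hence $C^{1,\alpha}$ on $(W_1,W_2)$. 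Your lifting-to-higher-dimensions idea to desingularize the axis is exactly right and matches the paper (it lifts to $\mathbb{R}^5$ for the stream-function problem and to $\mathbb{R}^3$ for the potential problem), but the route through a scalar equation for $W_1$ itself costs a derivative you cannot afford under the stated hypotheses.
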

 \begin{proof}
  We divide the proof into four steps.
  \par  { \bf Step 1}:  In this step, in order to solve \eqref{4-6},  we first introduce a transformation to  reduce it  to a typical form.
  \par Let
  \begin{equation}\label{4-8}
\begin{array}{ll}
\left\{
\begin{aligned}
&z_1=  \sqrt{\frac{1}{1-(M_b^-)^2}}y_1 ,\\
& z_2=y_2,\\
\end{aligned}
\right.  \quad \text{and}\quad
\left\{
\begin{aligned}
&V_1=\sqrt{{1-(M_b^-)^2}}W_1,\\
& V_2=W_2.\\
\end{aligned}
\right.
\end{array}
\end{equation}
The domain $\Omega $ becomes
 \begin{equation*}
 \Omega^\ast=\{(z_1,z_2): 0<z_1<L^\ast, 0<y_2<\frac12\},
 \end{equation*}
 where $ L^\ast=\sqrt{\frac{1}{1-(M_b^-)^2}}L $, and its boundaries are transformed into
 \begin{equation*}
\begin{aligned}
&\Sigma_0^\ast:=\{(z_1,z_2):  z_1=0,\ 0\leq z_2< \frac12\},\quad
\Sigma_L^\ast:=\{(z_1,z_2):  z_1=L^\ast,\ 0\leq z_2 <\frac12\},\\
&\Sigma_a^\ast:=\{(z_1,z_2): 0<z_1<L^\ast,\ z_2=0\},\quad
\Sigma^\ast:=\{(z_1,z_2): 0<z_1<L^\ast,\ z_2=\frac12\}.\\
\end{aligned}
\end{equation*}
Then the system \eqref{4-6} is reformulated as
 \begin{equation}\label{4-9}
\begin{cases}
\begin{aligned}
&\p_{z_1}(z_2V_1)
+\p_{z_2}(z_2V_2)
=z_2\mf_1, \\
&\p_{z_1}V_2-\p_{z_2}V_1
=\sqrt{{1-(M_b^-)^2}}\mf_2:=\e\mf_2, \\
 &V_1(0,z_2)=\sqrt{{1-(M_b^-)^2}}\mf_3:=\e\mf_3, \quad &{\rm{on}}\quad \Sigma_0^\ast,\\
&V_2(L^\ast,z_2)=0, \quad &{\rm{on}}\quad \Sigma_L^\ast,\\
&V_2(z_1,\frac12)=\mf_4,\quad &{\rm{on}}\quad \Sigma^\ast,\\
&V_2(z_1,0)=0, \quad &{\rm{on}}\quad \Sigma_a^\ast.\\
\end{aligned}
\end{cases}
\end{equation}
\par Next, we decompose the problem \eqref{4-9} into two boundary value problems with different inhomogeneous terms as follows.
Let $ (V_1,V_2)^T=(H_1,H_2)^T+(K_1,K_2)^T $, where $ (H_1,H_2)^T $ is the solution to the problem
 \begin{equation}\label{4-10}
\begin{cases}
\begin{aligned}
&\p_{z_1}(z_2H_1)
+\p_{z_2}(z_2H_2)
=0, \\
&\p_{z_1}H_2-\p_{z_2}H_1
=\e\mf_2, \\
 &H_1(0,z_2)=0, \quad &{\rm{on}}\quad \Sigma_0^\ast,\\
&H_2(L^\ast,z_2)=0, \quad &{\rm{on}}\quad \Sigma_L^\ast,\\
&H_2(z_1,\frac12)=0,\quad &{\rm{on}}\quad \Sigma^\ast,\\
&H_2(z_1,0)=0, \quad &{\rm{on}}\quad \Sigma_a^\ast,\\
\end{aligned}
\end{cases}
\end{equation}
and $ (K_1,K_2)^T $ satisfies the following problem
\begin{equation}\label{4-11}
\begin{cases}
\begin{aligned}
&\p_{z_1}(z_2K_1)
+\p_{z_2}(z_2K_2)
=z_2\mf_1, \\
&\p_{z_1}K_2-\p_{z_2}K_1
=0, \\
 &K_1(0,z_2)=\e\mf_3, \quad &{\rm{on}}\quad \Sigma_0^\ast,\\
&K_2(L^\ast,z_2)=0, \quad &{\rm{on}}\quad \Sigma_L^\ast,\\
&K_2(z_1,\frac12)=\mf_4,\quad &{\rm{on}}\quad \Sigma^\ast,\\
&K_2(z_1,0)=0, \quad &{\rm{on}}\quad \Sigma_a^\ast.\\
\end{aligned}
\end{cases}
\end{equation}
 \par  { \bf Step 2}:  In this step, we are going to solve \eqref{4-10}. The first equation in \eqref{4-10} implies that there exists a potential function $ \phi_1 $ such that
 \begin{equation}\label{4-12}
 (\p_{z_1}\phi_1,\p_{z_2}\phi_1)=(z_2H_2,-z_2H_1).
 \end{equation}
 Let $ \Phi_1=\frac{\phi_1}{z_2} $. Then \eqref{4-12} yields that
 \begin{equation}\label{4-13}
 (H_1,H_2)=(-(\p_{z_2}\Phi_1+\frac{\Phi_1}{z_2}),\p_{z_1}\Phi_1).
 \end{equation}
 Without loss of generality, we assume that $ \Phi_1(0,0)=0 $. Thus  \eqref{4-10} can be rewritten as the following equation for $ \Phi_1$:
 \begin{equation}\label{4-14}
\begin{cases}
\begin{aligned}
&\p_{z_1}^2\Phi_1+\p_{z_2}^2\Phi_1+\frac{\p_{z_2}\Phi_1}{z_2}
-\frac{\Phi_1}{z_2^2}
=\e\mf_2, \\
 &\p_{z_1}\Phi_1=0, \quad {\rm{on}}\quad \Sigma_L^\ast,\\
&\Phi_1=0,\qquad \ {\rm{on}}\quad \Sigma_a^\ast\cup\Sigma^\ast\cup \Sigma_0^\ast.\\
\end{aligned}
\end{cases}
\end{equation}
\par Obviously, the coefficients of equation \eqref{4-14} tends to infinity as $ z_2 $ goes to $ 0 $. By  applying the idea of Proposition 3.3 in \cite{BW18}, we rewrite \eqref{4-14} as a boundary value problem in $\mathbb{R}^5$ so that the singular term in \eqref{4-14} can be removed from the equation for $ \Phi_1 $. Set
\begin{equation}\label{4-15}
\Phi^\ast=\frac{\Phi_1}{z_2},\quad  \mf_2^\ast=\frac{\e\mf_2}{z_2}.
\end{equation}
 We  regard $\Phi^\ast $ and $ \mf_2^\ast $  as functions defined in
 \begin{equation}\label{4-16}
 \md^\ast:=\{(z_1,\mathbf z_2):z_1\in (0, L^\ast), \mathbf z_2\in \mathbb{R}^4: |\mathbf z_2|\leq\frac12\}\subset \mathbb{R}^5,
 \end{equation}
 where $ \mathbf  z_2=( z_{21},z_{22},z_{23},z_{24}) $ and $ \sum_{j=1}^{4}z_{2j}^2=|\mathbf z_2|^2 $. Define
 \begin{equation*}
 \bm \mg^\ast(z_1,\mathbf  z_2)=(0, \mg^\ast(z_1,\mathbf z_2)z_{21},\mg^\ast(z_1,\mathbf z_2)z_{22},\mg^\ast( z_1, \mathbf z_2) z_{23},\mg^\ast(z_1,\mathbf z_2)z_{24}), \ \forall \mathbf{z}=( z_1,\mathbf z_2)\in \md^\ast,
  \end{equation*}
  with
   \begin{equation*}
  \mg^\ast(z_1,\mathbf z_2)=\frac{1}{|\mathbf z_2|^4}\int_{0}^{\mathbf z_2}s^3   \mf_2^\ast( z_1,s)\de s.
  \end{equation*}
    Then it follows from \eqref{4-14} that
   \begin{equation}\label{4-17}
\begin{cases}
\begin{aligned}
 & \triangle_{\mathbf z}\Phi^\ast
=\text{div}_{\mathbf{z}}\bm \mg^\ast, \qquad {\rm{in }}\quad \md^\ast,\\
 &\p_{z_1}\Phi^\ast(L^\ast,\mathbf{z}_2)=0, \quad {\rm{on}}\quad  B_{L^\ast}:=\{L^\ast\}\times\{\mathbf{z}_2\in  \mathbb{R}^4: |\mathbf{z}_2|\leq\frac12\},\\
&\Phi^\ast(0,\mathbf{z}_2)=0,\qquad\ \  {\rm{on}}\quad  B_0:=\{0\}\times\{\mathbf{z}_2\in  \mathbb{R}^4: |\mathbf{z}_2|\leq \frac12\},\\
&\Phi^\ast(z_1,\mathbf{z}_2)=0,\qquad \ \  {\rm{on}}\quad  B_w:=[0,L^\ast]\times \{\mathbf{z}_2\in  \mathbb{R}^4: |\mathbf{z}_2|=\frac12\}.\\
\end{aligned}
\end{cases}
\end{equation}
\par The standard elliptic theory in \cite{GT98} yields that \eqref{4-17} has a unique weak solution $ \Phi^\ast\in H^1(\md^\ast) $ satisfying
 \begin{equation}\label{4-18}
 \ma[\Phi^\ast,\xi]=(\bm \mg^\ast,\xi) \quad {\rm{for}}\  {\rm{all}} \ \xi\in\{\xi\in H^1(\md^\ast): \xi=0 \quad {\rm{on}}\quad  B_0\cup B_w\},
 \end{equation}
 where
  \begin{equation*}
  \begin{aligned}
  \ma[\Phi^\ast,\xi]=\int_{\md^\ast} \n \Phi^\ast\n \xi\de \mathbf{z},\quad
  (\bm \mg^\ast,\xi)=\int_{\md^\ast} \bm \mg^\ast\n \xi\de \mathbf{z}.
  \end{aligned}
 \end{equation*}
 Furthermore, $  \Phi^\ast $ satisfies
  \begin{equation}\label{4-19}
  \|\Phi^\ast\|_{H^1(\md^\ast)}\leq C\|\bm\mg^\ast\|_{L^2(\md^\ast)}.
  \end{equation}
\par Next, we prove
\begin{equation}\label{4-20}
\|\bm\mg^\ast\|_{L^2(\md^\ast)}\leq C\|\bm\mg^\ast\|_{0,\alpha;\md^\ast}^{(1-\alpha,\p \md^\ast)}.
 \end{equation}
 Note that $ \e\mf_2(z_1,0)=0 $.  Then it holds that
 \begin{equation}\label{4-21}
 \mf_2^\ast(z_1,z_2)=\frac{\e\mf_2(z_1,z_2)-\e\mf_2(z_1,0)}{z_2}
 =\frac{\e\mf_2(z_1,z_2)-\e\mf_2(z_1,0)}{z_2^\alpha}z_2^{\alpha-1}.
 \end{equation}
 Since $ \e\mf_2\in C_{0,\alpha}^{(1-\alpha, \Sigma^\ast)}(\Omega^\ast) $, it is easy to check that
 \begin{equation}\label{4-22}
 \|\bm\mg^\ast\|_{0,\alpha;\md^\ast}^{(1-\alpha,\p \md^\ast)}\leq C
  \|\e\mf_2\|_{0,\alpha;\Omega^\ast}^{(1-\alpha,\Sigma^\ast)}.
 \end{equation}
 By the weighted H$\ddot{\rm{o}}$lder norm, one derives
 \begin{equation}\label{4-23}
 |\bm\mg^\ast(\mathbf{z})|\leq \delta_{\mathbf{z}}^{\alpha-1}\|\bm\mg^\ast\|
 _{0,\alpha;\e\md^\ast}^{({1-\alpha;\p\md^\ast})},
 \end{equation}
 with $ \delta_{\mathbf{z}}={\rm{dist}}(\mathbf{z},\p\md^\ast)$. Thus for $ \alpha \in(\frac{1}{2},1) $, one has
  \begin{equation*}
  \int_{ \md^\ast} |\bm\mg^\ast|^2\de \mathbf{z}\leq C\left(\|\bm\mg^\ast\|_{0,\alpha;\md^\ast}^{({1-\alpha;\p\md^\ast})}
  \right)^2.
   \end{equation*}
  \par  Next, we improve the regularity of $ \Phi^\ast $.  For $ \mathbf{z}_0  \in {\md^\ast} $ and $ \eta\in \mathbb{R } $ with $ 0<\eta<\frac{1}{10} $, set
  \begin{equation*}
  \begin{aligned}
  &B_{\eta}(\mathbf{z}_0):=\{\mathbf{z}\in \mathbb{R}^5:
  |\mathbf{z}_0-\mathbf{z}|<\eta\}, \quad D_{\eta}(\mathbf{z}_0):= B_{\eta}(\mathbf{z}_0)\cap \md^\ast,\\
  & \Phi^\ast_{\mathbf{z}_0,\eta}:=\frac{1}{|D_{\eta}(\mathbf{z}_0)|}
  \int_{D_{\eta}(\mathbf{z}_0)} \Phi^\ast \de \mathbf{z}.
  \end{aligned}
 \end{equation*}
 Note that there exists  a constant $ \lambda_0\in(0,1/10) $ such that
 \begin{equation*}
 \lambda_0\leq \frac{|D_{\eta}(\mathbf{z}_0)|}{|B_{\eta}(\mathbf{z}_0)|}
 \leq \frac{1}{\lambda_0}.
 \end{equation*}
 Hence we follow the proof in Theorem 3.8 of \cite{HL11} to get
 \begin{equation}\label{4-24}
  \int_{D_{\eta}(\mathbf{z})}| \Phi^\ast- \Phi^\ast_{\mathbf{z},\eta}|^2\de \mathbf{z}\leq C\left(\|\bm\mg^\ast\|_{0,\alpha;\md^\ast}^{({1-\alpha;\p\md^\ast})}
  \right)^2
  \eta^{5+2\alpha}
  \end{equation}
  for any $ \mathbf{z}  \in \overline{\md^\ast} $. Once \eqref{4-24} is obtained, it follows from Theorem 3.1 in  \cite{HL11} that
   \begin{equation}\label{4-25}
  \|\Phi^\ast\|_{0,\alpha;\md^\ast}\leq C\|\bm\mg^\ast\|_{0,\alpha;\md^\ast}^{({1-\alpha;\p\md^\ast})}.
  \end{equation}
  \par We proof \eqref{4-24} only for the case $ \mathbf{z}  \in B_0\cap B_w$, since the other cases can be treated similarly. Fix $ \mathbf{z}_0\in B_0\cap B_w $ and $ \chi\in \mathbb{R} $ with $ 0<\chi<\frac{1}{10} $. Let $ \Phi^\ast_h $ be a weak solution of the following problem:
  \begin{equation}\label{4-26}
\begin{cases}
\begin{aligned}
 & \triangle_{\mathbf z}\Phi_h^\ast
=0, \qquad {\rm{in }}\quad D_{\chi}(\mathbf{z}_0),\\
 &\Phi_h^\ast=\Phi^\ast,  \qquad\ \ {\rm{in }}\quad \p D_{\chi}(\mathbf{z}_0)\cap \md^\ast.\\
\end{aligned}
\end{cases}
\end{equation}
Then $ \e\Phi_h^\ast=\Phi^\ast-\Phi_h^\ast $ satisfies
\begin{equation*}
\int_{D_{\chi}(\mathbf{z}_0)} \n \e\Phi_h^\ast\n \xi\de \mathbf{z}=\int_{D_{\chi}(\mathbf{z}_0)} \bm \mg^\ast\n \xi\de \mathbf{z},
\end{equation*}
for any $\xi\in\{\xi\in H^1(D_{\chi}(\mathbf{z}_0): \xi=0 \ {\rm{on}}\  \p D_{\chi}(\mathbf{z}_0) \cap(B_0\cup B_w)\} $.  Taking the test function $ \xi=\e\Phi_h^\ast $ and using the H\"{o}lder inequality to yield that
\begin{equation}\label{4-27}
\int_{D_{\chi}(\mathbf{z}_0)} |\n \e\Phi_h^\ast|^2\de \mathbf{z}\leq\int_{D_{\chi}(\mathbf{z}_0)} |\bm \mg^\ast|^2\de \mathbf{z}.
\end{equation}
Due to $ \bm \mg^\ast\in C_{0,\alpha}^{({1-\alpha;\p\md^\ast})}(\md^\ast) $, one gets
\begin{equation}\label{4-28}
\int_{D_{\chi}(\mathbf{z}_0)} |\n \e\Phi_h^\ast|^2\de \mathbf{z}
\leq\left(\|\bm\mg^\ast\|_{0,\alpha;\md^\ast}^{({1-\alpha;\p\md^\ast})}
  \right)^2\int_{D_{\chi}(\mathbf{z}_0)}\delta_{\mathbf{z}}^{2(\alpha-1)}
  \de  \mathbf{z}\leq\chi^{3+2\alpha}\left(\|\bm\mg^\ast\|_{0,\alpha;\md^\ast}^{({1-\alpha;\p\md^\ast})}
  \right)^2.
  \end{equation}
   By Corollary 3.11 in \cite{HL11}, for $0<\eta<\chi $, one has
  \begin{equation}\label{4-29}
  \begin{aligned}
    \int_{D_{\eta}(\mathbf{z}_0)}|\n \Phi^\ast|^2\de \mathbf{z}&\leq
    C\left(\frac{\eta}{\chi}\right)^5\int_{D_{\chi}(\mathbf{z}_0)}|\n \Phi^\ast|^2\de \mathbf{z}
    +C\left(\|\bm\mg^\ast\|_{0,\alpha;\md^\ast}^{({1-\alpha;\p\md^\ast})}
  \right)^2
  \chi^{3+2\alpha}.
  \end{aligned}
  \end{equation}
  Then it follows from Lemma 3.4 in \cite{HL11} that one obtains
   \begin{equation}\label{4-30}
  \begin{aligned}
    \int_{D_{\eta}(\mathbf{z}_0)}|\n \Phi^\ast|^2\de \mathbf{z}&\leq
    C\left(\frac{1}{\chi^{3+2\alpha}}\int_{\md^\ast}|\n \Phi^\ast|^2\de \mathbf{z}
    +\left(\|\bm\mg^\ast\|_{0,\alpha;\md^\ast}^{({1-\alpha;\p\md^\ast})}
  \right)^2\right)
  \eta^{3+2\alpha}.
  \end{aligned}
  \end{equation}
  By applying Poincar\'{e} inequality, one can derive
  \begin{equation}\label{4-31}
  \int_{D_{\eta}(\mathbf{z}_0)}| \Phi^\ast- \Phi^\ast_{\mathbf{z},\eta}|^2\de \mathbf{z}\leq
    C\left(\frac{1}{\chi^{3+2\alpha}}\int_{\md^\ast}|\n \Phi^\ast|^2\de \mathbf{z}
    +\left(\|\bm\mg^\ast\|_{0,\alpha;\md^\ast}^{({1-\alpha;\p\md^\ast})}
  \right)^2\right)
  \eta^{5+2\alpha}.
\end{equation}
Then it follows from \eqref{4-19} and \eqref{4-20} that
 \begin{equation*}
 \int_{D_{\eta}(\mathbf{z}_0)}| \Phi^\ast- \Phi^\ast_{\mathbf{z},\eta}|^2\de \mathbf{z}
    \leq C\left(\|\bm\mg^\ast\|_{0,\alpha;\md^\ast}^{({1-\alpha;\p\md^\ast})}
  \right)^2
  \eta^{5+2\alpha}.
  \end{equation*}
  Hence the proof of \eqref{4-25} is completed.
  \par Therefore, by  the scaling argument and the Schauder estimate in \cite{GT98}, one has
  \begin{equation}\label{4-32}
 \|\Phi^\ast\|_{1,\alpha;\md^\ast}^{(-\alpha,\p \md^\ast)} \leq C\|\bm\mg^\ast\|_{0,\alpha;\md^\ast}^{(1-\alpha,\p \md^\ast)}\leq C
  \|\e\mf_2\|_{0,\alpha;\Omega^\ast}^{(1-\alpha,\Sigma^\ast)}.
 \end{equation}
Furthermore, the rotational  invariance of the boundary value problem \eqref{4-17} and the  uniqueness of the solution $ \Phi^\ast $ and  the estimate \eqref{4-32} imply that
 \begin{equation*}
 \|\Phi^\ast\|_{1,\alpha;\Omega^\ast}^{(-\alpha,\Sigma^\ast)} \leq C
  \|\e\mf_2\|_{0,\alpha;\Omega^\ast}^{(1-\alpha,\Sigma^\ast)}.
  \end{equation*}
 Using the first equation in \eqref{4-14}, one can verify
 \begin{equation}\label{4-33}
 \p_{z_1}^2\Phi_1+\p_{z_2}^2\Phi_1
=\e\mf_2-\frac{\p_{z_2}\Phi_1}{z_2}+\frac{\Phi_1}{z_2^2}=
\e\mf_2-\p_{z_2}\Phi^\ast\in H_{0,\alpha}^{(1-\alpha,\Sigma^\ast)}(\Omega^\ast).
 \end{equation}
 By the Schauder estimate in Theorem 4.6 of \cite{LG13}, we obtain \begin{equation}\label{4-34}
 \|\Phi_1\|_{2,\alpha;\Omega^\ast}^{(-1-\alpha,\Sigma^\ast)} \leq C
  \|\e\mf_2\|_{0,\alpha;\Omega^\ast}^{(1-\alpha,\Sigma^\ast)}.
  \end{equation}
  Finally, by the definition of $ \Phi_1 $, one has
  \begin{equation}\label{4-35}
  \sum_{j=1}^2 \|H_j\|_{1,\alpha;\Omega^\ast}^{(-\alpha,\Sigma^\ast)}
  \leq C
  \|\e\mf_2\|_{0,\alpha;\Omega^\ast}^{(1-\alpha,\Sigma^\ast)}.
  \end{equation}
  \par  { \bf Step 3}:  In this step, we are going to solve \eqref{4-11}. The second equation in \eqref{4-11} implies that there exists a potential function $ \phi_2 $ such that
 \begin{equation}\label{4-36}
 (\p_{z_1}\phi_2,\p_{z_2}\phi_2)=(K_1,K_2), \quad \phi_2(L^\ast,0)=0.
 \end{equation}
 Then  \eqref{4-11} can be rewritten as the following equation for $ \phi_2$:
 \begin{equation}\label{4-37}
\begin{cases}
\begin{aligned}
&\p_{z_1}(z_2\p_{z_1}\phi_2)
+\p_{z_2}(z_2\p_{z_2}\phi_2)
=z_2\mf_1, \\
 &\p_{z_1}\phi_2(0,z_2)=\e\mf_3, \quad &{\rm{on}}\quad \Sigma_0^\ast,\\
&\phi_2(L^\ast,z_2)=0, \quad &{\rm{on}}\quad \Sigma_L^\ast,\\
&\p_{z_2}\phi_2(z_1,\frac12)=\mf_4,\quad &{\rm{on}}\quad \Sigma^\ast,\\
&\p_{z_2}\phi_2(z_1,0)=0, \quad &{\rm{on}}\quad \Sigma_a^\ast.\\
\end{aligned}
\end{cases}
\end{equation}
\par In order to deal with the singularity near $ z_2=0 $, we rewrite the problem \eqref{4-37} in the three dimensional
setting. Define
\begin{equation*}
\zeta_1=z_1,\ \zeta_2=z_2\cos\tau,\ \zeta_3=z_2\sin\tau,\ \tau\in[0,2\pi],
\end{equation*}
and
\begin{equation*}
\begin{aligned}
&E_1=\{(\zeta_1,\zeta_2,\zeta_3):0<\zeta_1<L^\ast, \zeta_2^2+\zeta_3^2\leq \frac12\}, \quad E_2= \{(\zeta_2,\zeta_3): \zeta_2^2+\zeta_3^2\leq \frac12\},\\
&\Gamma_{w,\zeta}=[0,L^\ast]\times\{(\zeta_2,\zeta_3):\zeta_2^2+\zeta_3^2= \frac12\},\\
&\Gamma_{0,\zeta}=\{0\}\times\{(\zeta_2,\zeta_3):\zeta_2^2+\zeta_3^2\leq \frac12\},\quad
\Gamma_{L^\ast,\zeta}=\{L^\ast\}\times\{(\zeta_2,\zeta_3):
\zeta_2^2+\zeta_3^2\leq \frac12\},\\
&\Psi(\bm \zeta)= \phi_2(\zeta_1,\sqrt{\zeta_2^2+\zeta_3^2})=
\Psi(\zeta_1,|\zeta^\prime|).
\end{aligned}
\end{equation*}
Then $ \Psi $ solves the following problem
 \begin{equation}\label{4-38}
\begin{cases}
\begin{aligned}
&\Delta \Psi
=\mf_1(\zeta_1,|\zeta'|), \\
 &\p_{\zeta_1}\Psi(0,|\zeta'|)=\e\mf_3(|\zeta'|), \quad &{\rm{on}}\quad \Gamma_{0,\zeta},\\
&\Psi(L^\ast,|\zeta'|)=0, \quad &{\rm{on}}\quad \Gamma_{L^\ast,\zeta},\\
&(\zeta_2\p_{\zeta_2}+ \zeta_3\p_{\zeta_3})\Psi(\zeta_1,|\zeta'|)=\frac12\mf_4(\zeta_1),\quad &{\rm{on}}\quad \Gamma_{w,\zeta}.\\
\end{aligned}
\end{cases}
\end{equation}
First, the weak solution to \eqref{4-38} can be obtained as follows.  $ \Psi\in  H^1(E_1) $ is said to be a weak solution to \eqref{4-38} if the following holds
 \begin{equation}\label{4-39}
 \mm(\Psi,\psi)=\mb(\psi), \quad {\rm{for}}\  {\rm{all}} \ \psi\in\{\psi\in H^1(E_1): \psi=0 \quad {\rm{on}}\quad  \Gamma_{L^\ast,\zeta}\},
 \end{equation}
 where
  \begin{equation*}
  \begin{aligned}
  &\mm(\Psi,\psi)=\int_{E_1}\n\Psi\n\psi\de \bm \zeta,\\
  & \mb(\psi)=-\int_{E_1}\mf_1\psi\de \bm \zeta
   +\int_{E_2}\e\mf_3\psi\de  \zeta_2\de  \zeta_3+\int_{0}^{L^\ast}\frac12\mf_4(s)\psi(s,\frac12)\de s.
   \end{aligned}
   \end{equation*}
   By the Lax-Milgram theorem, there exists a unique weak solution $ \Psi\in  H^1(E) $. Then multiplying $ \Psi $ on the sides  of the equation \eqref{4-38} and integrating over $ E_1 $ yield that
    \begin{equation}\label{4-40}
    \|\n \Psi\|_{L^2(E_1)}^2\leq C\| \mf_1\|_{0,\alpha;E_1}^{({1-\alpha;\Gamma_{w,\zeta}})} \| \Psi\|_{L^2(E_1)}+C\| \e\mf_3\|_{L^2(E_2)} \| \Psi\|_{L^2(E_2)}+C\| \mf_4\|_{L^2[0,L^\ast]} \| \Psi\|_{{L^2[0,L^\ast]}}.
    \end{equation}
   Then it follows from Poincar\'{e} inequality and the trace theorem that one obtains
    \begin{equation}\label{4-41}
    \|\Psi\|_{H^1(E_1)}\leq C\left(\|\mf_1\|_{0,\alpha;E_1}^{({1-\alpha;\Gamma_{w,\zeta}})}
    +\| \e\mf_3\|_{1,\alpha;E_2}^{(-\alpha,\Gamma_{w,\zeta})}+\| \mf_4\|_{0,\alpha;[0,L^\ast]}\right).
    \end{equation}
    \par  Next, we can follow the analogous argument as in Step 2 to obtain $ C^{0,\alpha} $ estimate for $ \Psi $.  Then  the Schauder estimate in Theorem 4.6 of \cite{LG13}
 implies that
     \begin{equation}\label{4-42}
 \|\Psi\|_{2,\alpha;E_1}^{(-1-\alpha,\Gamma_{w,\zeta})} \leq  C\left(\|\mf_1\|_{0,\alpha;E_1}^{({1-\alpha;\Gamma_{w,\zeta}})}
    +\| \e\mf_3\|_{1,\alpha;E_2}^{(-\alpha,\Gamma_{w,\zeta})}+\| \mf_4\|_{0,\alpha;[0,L^\ast]}\right).
  \end{equation}
Finally, it follows from the definition of $ \phi_2 $ that
  \begin{equation}\label{4-43}
  \sum_{j=1}^2 \|K_j\|_{1,\alpha;\Omega^\ast}^{(-\alpha,\Sigma^\ast)}
  \leq C
  \left(\|\mf_1\|_{0,\alpha;\Omega^\ast}^{({1-\alpha;\Sigma^\ast})}
    +\| \e\mf_3\|_{1,\alpha;[0,\frac12)}^{(-\alpha;\{\frac12\})}+\| \mf_4\|_{0,\alpha;[0,L^\ast]}\right).
  \end{equation}
  \par  { \bf Step 4}: By recalling the transformation \eqref{4-8} and combining the estimates \eqref{4-35} and \eqref{4-43}, we conclude that the boundary value problem \eqref{4-6} has a unique solution $ (W_1,W_2) \in \left( C_{1,\alpha}^{(-\alpha,\Sigma)}(\Omega)\right)^2 $ satisfying
  \begin{equation*}
  \sum_{j=1}^2 \|W_j\|_{1,\alpha;\Omega}^{(-\alpha,\Sigma)}\leq
  C\left(\sum_{j=1}^2 \|\mf_j\|_{0,\alpha;\Omega}^{(1-\alpha,\Sigma)}+
    \|\mf_3\|_{1,\alpha;[0,\frac12)}^{(-\alpha;\{\frac12\})}+ \|\mf_4\|_{0,\alpha;[0,L]}\right).
    \end{equation*}
    \par Thus, the proof of Lemma 4.1 is completed.
 \end{proof}
 \subsection{Solving the nonlinear boundary value problem  }\noindent
 \par   For a given $\h{{\bf W}}\in \mj(\delta_1)$,
  it follows from Lemma  4.1 that the problem \eqref{4-6} has a unique solution $ (W_1,W_2) \in \left( C_{1,\alpha}^{(-\alpha,\Sigma)}(\Omega)\right)^2 $ satisfying the estimate  \eqref{4-7}.   Define a map $ \mt $ as follows
\begin{equation}\label{4-44}
\mt( \h{{\bf W}})=({{\bf W}}),
\quad{\rm{ for }} \  \h{{\bf W}}\in \mj(\delta_1).
\end{equation}
  The estimate   \eqref{4-7}, together with  \eqref{4-a}, \eqref{4-d} and \eqref{4-5}, yields
    \begin{equation}\label{4-45}
    \begin{aligned}
   \sum_{j=1}^3 \|W_j\|_{1,\alpha;\Omega}^{(-\alpha,\Sigma)}
    \leq
  \mc_1
  \left(\delta_1^2+\delta_1\|w\|_{0,\alpha;[0,L]}
   +\|w\|_{0,\alpha;[0,L]}
+\sigma_{cd}\right),
\end{aligned}
 \end{equation}
 where $ \mc_1>0 $ depends only on  $ (\bm {U}_b^-,L,\alpha) $.
\par  We assume that
\begin{equation}\label{4-46}
\|w\|_{0,\alpha;[0,L] }\leq \delta_2,
\end{equation}
where $ \mc_1 \delta_2\leq \frac{\delta_1}{4}  $
with $ \mc_1 $ given in \eqref{4-45}.  Let $\sigma_2=\frac{1}{4(\mc_1^2+\mc_1)}$ and choose $\delta_1= 4\mc_1\sigma_{cd}$. Then if $ \sigma_{cd}\leq \sigma_2 $, one has
\begin{equation}\label{4-47}
    \sum_{j=1}^3 \|W_j\|_{1,\alpha;\Omega}^{(-\alpha,\Sigma)}
    \leq \delta_1.
  \end{equation}
  Hence $ \mt $ maps $ \mj(\delta_1) $ into itself.
  \par In the following, we will show that $\mt$ is a contraction in $\mj(\delta_1)$. Let $\h{{\bf W}}^k\in \mj(\delta_1), k=1,2$, one has ${\bf W}^k= \mt(\h{{\bf W}}^k)$ for $k=1,2$. Define
\begin{equation*}
{\bf Y}= {\bf W}^1-{\bf W}^2,\quad\quad \h{{\bf Y}}= \h{{\bf W}}^1-\h{{\bf W}}^2.
\end{equation*}
Then it follows from \eqref{4-2} that
\begin{equation}\label{4-e}
  Y_3(y_1,y_2)={\h \Lambda(y_2)}\left(\frac{1}{ \h r^1(y_1,y_2)}-\frac{1}{ \h r^2(y_1,y_2)}\right),
   \end{equation}
  where
  \begin{equation*}
   \h r^i(y_1,y_2)=\left(2\int_{0}^{y_2}\frac{2s}{ \h \rho(\h W_1^i+u_b^-,\h W_2^i,\h W_3^i, \e A_{0},\e B_{0})(\h W_1^i+u_b^-)(y_1,s)}\de s\right)^{\frac{1}{2}}.
  \end{equation*}
Next, we obtain that $ (Y_1,Y_2) $ satisfies
\begin{equation}\label{4-48}
\begin{cases}
\begin{aligned}
&(1-(M_b^-)^2)\p_{y_1}Y_1
+\p_{y_2}Y_2
+\frac{1}{y_2}Y_2=
\mf_1(\h{{\bf W}}^1,\n{\h{\bf W}}^1,\h A,\h B)-\mf_1(\h{{\bf W}}^2,\n{\h{\bf W}}^2,\h A,\h B), \\
&\p_{y_1}Y_2-\p_{y_2}Y_1
=\mf_2(\h{{\bf W}}^1,\n{\h{\bf W}}^1,\h A,\h B)-\mf_2(\h{{\bf W}}^2,\n{\h{\bf W}}^2,\h A,\h B), \\
 &Y_1(0,y_2)=(\mf_3(\h{{\bf W}}^1,\h J,\h A,\h B)-\mf_3(\h{{\bf W}}^2,\h J,\h A,\h B))(0,y_2),\\
&Y_2(L,y_2)=0,\\
&Y_2(y_1,\frac12)=(\mf_4(\h{{\bf W}}^1,g_{cd})-\mf_4(\h{{\bf W}}^2,g_{cd}))(y_1,\frac12),\\
&Y_2(y_1,0)=0.\\
\end{aligned}
\end{cases}
\end{equation}
where  $\mf_j(\h{{\bf W}}^k), j=1,2,3,4, k=1,2,3 $ are functions defined in \eqref{4-6} by replacing $\h{{\bf W}}$  with $\h{{\bf W}}^k$  respectively.
Then  it follows from  Lemma 4.1 that one can derive
\begin{equation}\label{4-49}
\begin{aligned}
  \sum_{j=1}^3 \|Y_j\|_{1,\alpha;\Omega}^{(-\alpha,\Sigma)}&\leq
  \mc_2\left(\sum_{j=1}^2 \|\mf_j(\h{{\bf W}}^1,\n{\h{\bf W}}^1,\h A,\h B)-\mf_j(\h{{\bf W}}^2,\n{\h{\bf W}}^2,\h A,\h B)\|_{0,\alpha;\Omega}^{(1-\alpha,\Sigma)}\right.\\
 &\qquad\quad\left. +
    \|\mf_3(\h{{\bf W}}^1,\h J,\h A,\h B)-\mf_3(\h{{\bf W}}^2,\h J,\h A,\h B)\|_{1,\alpha;[0,\frac12)}^{(-\alpha;\{\frac12\})}\right.\\
 &\qquad\quad \left.   + \|\mf_4(\h{{\bf W}}^1,g_{cd})-\mf_4(\h{{\bf W}}^2,g_{cd})\|_{0,\alpha;[0,L]}+\left\|{\h \Lambda}\left(\frac{1}{ \h r^1}-\frac{1}{ \h r^2}\right)\right\|_{1,\alpha;\Omega}^{(-\alpha,\Sigma)}
   \right)\\
&\leq \mc_2(\delta_1+\delta_2+\sigma_{cd}) \sum_{j=1}^3 \|\h Y_j\|_{1,\alpha;\Omega}^{(-\alpha,\Sigma)},
    \end{aligned}
    \end{equation}
    where $ \mc_2>0 $ depends only on $ (\bm {U}_b^-,L,\alpha) $.
    Setting
\begin{equation}\label{4-50}
 \sigma_3=\min\left\{\sigma_2,\frac{1}{
4\mc_2(4\mc_1+2)} \right\}.
\end{equation}
Then for $ \sigma_{cd}\leq \sigma_3 $, one has
$ \mc_2 (\delta_1+\delta_2+\sigma_{cd})\leq\mc_2 (4\mc_1+2)\sigma_{cd}\leq \frac{1}{4}  $. Hence the mapping $ \mt $ is a contraction mapping so that $\mt $ has a unique fixed point in $ \mj(\delta_1)$.
\section{The construction of the contact discontinuity surface}\noindent
  \par Up to now, for a given function $ g_{cd}(y_1)=\int_{0}^{y_1}w(s)\de s+\frac12  $ satisfying $g_{cd}^\prime(L)=0 $,  we have obtained the solution $ (u_x,u_r,u_\theta) $ for the nonlinear  boundary value problem \eqref{3-20}-\eqref{3-21}. To complete the proof of Theorem 2.3, we will use the implicit function theorem to find the contact discontinuity  $ g_{cd}(y_1) $ such that \eqref{3-22} is satisfied.
  \par  First,
  define a Banach space
\begin{equation*}
 \ml=\{w: \|w \|_{0,\alpha;[0,L] }
  < \infty\}.
  \end{equation*}
    Set
  \begin{equation}\label{5-1}
 \ml_1=\{ w:w(L)=0, \|w \|_{0,\alpha;[0,L] }
 < \infty\}
  \end{equation}
  and
  \begin{equation}\label{5-a}
 \ml_1(\delta_2)=\{w\in \ml_1:  \|w \|_{0,\alpha;[0,L] }
 \leq \delta_2\},
  \end{equation}
  where  $ \delta_2$ is defined in \eqref{4-46}. Then for any $ w\in  \ml_1(\delta_2)$,
   the nonlinear boundary value problem \eqref{3-20}-\eqref{3-21} has a unique solution $ (u_x,  u_r,u_\theta) $ satisfying
 \begin{equation}\label{5-2}
   \|u_x-u_b^-\|_{1,\alpha;\Omega}^{(-\alpha,\Sigma)}+
   \|u_r\|_{1,\alpha;\Omega}^{(-\alpha,\Sigma)}+ \|u_\theta\|_{1,\alpha;\Omega}^{(-\alpha,\Sigma)}\leq 4\mc_1\sigma_{cd}.
   \end{equation}
   \par Let
\begin{equation*}
   \begin{aligned}
  \ml_0= C^{1,\alpha}([0,1/2])\times C^{1,\alpha}([0,1/2])\times C^{1,\alpha}([0,1/2])\times
    C^{1,\alpha}([0,1/2]).
   \end{aligned}
    \end{equation*}
    Then we set
    \begin{equation}\label{5-3}
    \ml_2(\delta_3)=\{
    \bm\varphi_0\in \ml_0:\|\bm \varphi_0-\bm \varphi_b\|_{ \ml_0}\leq \delta_3\},
    \end{equation}
    where \begin{equation*}
     \bm \varphi_0=( J_{0}, \nu_{0},A_0, B_0) \quad {\rm{and}} \quad
     \bm \varphi_b= (J_{b}^-, 0, A_b^-, B_b^-).
   \end{equation*}
   \par Define a map $ \mq:  \ml_2(\delta_3) \times  \ml_1(\delta_2)\rightarrow   \ml$ by
  \begin{equation}\label{5-4}
  \mq(\bm \varphi_0,w):=N(W_1,W_2,W_3, \e A_{0},\e B_{0}) (y_1,\frac12)-P_b,
  \end{equation}
  where
  \begin{equation*}
  N(W_1,W_2,W_3, \e A_{0},\e B_{0})(y_1,\frac12)=\e A_{0}\left(\frac12\right)\left(\rho(W_1+u_b^-,W_2,W_3, \e A_{0},\e B_{0})\right)^\gamma(y_1,\frac12).
   \end{equation*}
  Hence \eqref{3-22} can be written as the equation
  \begin{equation}\label{5-5}
  \mq(\bm \varphi_0,w)=0,
  \end{equation}
   which will be solved \eqref{5-5} by employing the implicit function theorem. For the precise statement of the implicit function theorem, one can see Theorem 3.3 in \cite{WZ23}.  We will verify the conditions $ \rm(i) $, $ \rm(ii) $ and $ \rm(iii) $ in Theorem 3.3 of \cite{WZ23}.
      \par  Obviously,
  \begin{equation*}
  \mq(\bm \varphi_b,0)=0.
  \end{equation*}
  Next, the proof is divided  into two steps.
  \par {\bf Step 1. Differentiability of $ \mq $.}
   \par
   Given any $ w\in\ml_1(\delta_2),w_1\in \ml_1  $, and  $ \tau>0 $, let $( u_x, u_r,  u_\th)=( W_1+u_b^-,W_2, W_3) $ be the solution of \eqref{3-20} with the following boundary conditions:
     \begin{equation*}
\begin{cases}
\rho  u_x(0,y_2 )=\e J_0(y_2), \ u_\theta(0,y_2)=\e \nu_{0}(y_2),\quad &{\rm{on}}\quad \Sigma_0,\\
 u_r(L,y_2)=0, \quad &{\rm{on}}\quad \Sigma_L,\\
 u_r(y_1,\frac12)=\e u_x(y_1,\frac12)w(y_1),\quad &{\rm{on}}\quad \Sigma,\\
 u_r(y_1,0)=0, \quad &{\rm{on}}\quad \Sigma_a,\\
\end{cases}
\end{equation*}
   and $(\e u_x, \e u_r, \e u_\th)=(\e W_1+u_b^-,\e W_2, \e  W_3) $ be the solution of \eqref{3-20} with the following boundary conditions:
   \begin{equation*}
\begin{cases}
\e\rho \e u_x(0,y_2 )=\e J_0(y_2), \ \e u_\theta(0,y_2)=\e \nu_{0}(y_2),\quad &{\rm{on}}\quad \Sigma_0,\\
\e u_r(L,y_2)=0, \quad &{\rm{on}}\quad \Sigma_L,\\
\e u_r(y_1,\frac12)=\e u_x(y_1,\frac12)(w+\tau w_1)(y_1),\quad &{\rm{on}}\quad \Sigma,\\
\e u_r(y_1,0)=0, \quad &{\rm{on}}\quad \Sigma_a.\\
\end{cases}
\end{equation*}
Then it follows from Section 4 that
   \begin{equation}\label{5-6}
   \e W_3=\mf_0(\e{\bf W},\h\Lambda,\h A,\h B), \quad {\rm{and}}\quad \ W_3=\mf_0({\bf W},\h\Lambda,\h A,\h B),
    \end{equation}
    where
     \begin{equation*}
     \begin{aligned}
      \mf_0(\e{\bf W},\h\Lambda,\h A,\h B):=\frac{\h \Lambda}{\e r}=\frac{\h \Lambda}{\left(2\int_{0}^{y_2}\frac{2s}{ \rho(\e W_1+u_b^-,\e W_2,\e W_3, \e A_{0},\e B_{0})(\e W_1+u_b^-)(y_1,s)}\de s\right)^{\frac{1}{2}}}, \\
      \mf_0({\bf W},\h\Lambda,\h A,\h B):=\frac{\h \Lambda}{ r}=\frac{\h \Lambda}{\left(2\int_{0}^{y_2}\frac{2s}{ \rho( W_1+u_b^-, W_2, W_3, \e A_{0},\e B_{0})( W_1+u_b^-)(y_1,s)}\de s\right)^{\frac{1}{2}}}.
        \end{aligned}
        \end{equation*}
   Furthermore, $  ( W_1,   W_2) $ and $ (\e W_1,  \e W_2) $  satisfy
  \begin{equation}\label{5-8}
\begin{cases}
\begin{aligned}
&(1-(M_b^-)^2)\p_{y_1}W_1
+\p_{y_2}W_2
+\frac{1}{y_2}W_2=
\mf_1({{\bf W}},\n{{\bf W}},\h A,\h B), \\
&\p_{y_1}W_2-\p_{y_2}W_1
=\mf_2({{\bf W}},\n{{\bf W}},\h A,\h B), \\
 &W_1(0,y_2)=\mf_3({{\bf W}},\h J,\h A,\h B)(0,y_2),\\
&W_2(L,y_2)=0, \\
&W_2(y_1,\frac12)=\left(u_b^-+W_1(y_1,\frac12)\right)w(y_1),\\
&W_2(y_1,0)=0,\\
\end{aligned}
\end{cases}
\end{equation}
and
 \begin{equation}\label{5-7}
\begin{cases}
\begin{aligned}
&(1-(M_b^-)^2)\p_{y_1}\e W_1
+\p_{y_2}\e W_2
+\frac{1}{y_2}\e W_2=
\mf_1(\e{\bf W},\n{\e{\bf W}},\h A,\h B), \\
&\p_{y_1}\e W_2-\p_{y_2}\e W_1
=\mf_2(\e{{\bf W}},\n{\e{\bf W}},\h A,\h B), \\
 &\e W_1(0,y_2)=\mf_3(\e{{\bf W}},\h J,\h A,\h B)(0,y_2),\\
&\e W_2(L,y_2)=0, \\
&\e W_2(y_1,\frac12)=\left(u_b^-+\e W_1(y_1,\frac12)\right)(w+\tau w_1)(y_1),\\
&\e W_2(y_1,0)=0.\\
\end{aligned}
\end{cases}
\end{equation}
Choosing $ \tau_1>0 $ such that  $ \tau_1\| w_1\|_{0,\alpha;[0,L]}\leq \delta_2 $. Then for $ \tau \in (0,\tau_1) $, it follows from  Section 4  that one gets
\begin{equation}\label{5-b}
\sum_{j=1}^3 \|\e W_j\|_{1,\alpha;\Omega}^{(-\alpha,\Sigma)}
    \leq 4\mc_1\sigma_{cd}.
    \end{equation}
\par Denote
  \begin{equation*}
  {\bf W}^{\tau}=\frac{\e{{\bf W}}-{{\bf W}}}{\tau}.
  \end{equation*}
  It follows from \eqref{5-6}-\eqref{5-8} that one obtains
  \begin{equation}\label{5-9}
\begin{cases}
\begin{aligned}
&W_3^\tau=\frac{\e\mf_0(\e{\bf W},\h\Lambda,\h A,\h B)-\e\mf_0({\bf W},\h\Lambda,\h A,\h B)}{\tau},\\
&(1-(M_b^-)^2)\p_{y_1}W_1^\tau
+\p_{y_2}W_2^\tau
+\frac{1}{y_2}W_2^\tau=
\frac{\mf_1(\e{{\bf W}},\n{\e{\bf W}},\h A,\h B)-\mf_1({{\bf W}},\n{{\bf W}},\h A,\h B)}{\tau}, \\
&\p_{y_1}W_2^\tau-\p_{y_2}W_1^\tau
=\frac{\mf_2(\e{{\bf W}},\n{\e{\bf W}},\h A,\h B)-\mf_2({{\bf W}},\n{{\bf W}},\h A,\h B)}{\tau}, \\
 &W_1^\tau(0,y_2)=\frac{\mf_3(\e{{\bf W}},\h J,\h A,\h B)-\mf_3({{\bf W}},\h J,\h A,\h B)}{\tau}(0,y_2),\\
&W_2^\tau(L,y_2)=0, \\
&W_2^\tau(y_1,\frac12)=\left(u_b^-+\e W_1(y_1,\frac12)\right)w_1(y_1)+W_1^\tau(y_1,\frac12) w(y_1),\\
&W_2^\tau(y_1,0)=0.\\
\end{aligned}
\end{cases}
\end{equation}
Then for $ \tau \in (0,\tau_1) $, one can apply   \eqref{5-b} and \eqref{4-7} to obtain the  following estimate:
 \begin{equation}\label{5-10}
\begin{aligned}
\sum_{j=1}^3 \|W_j^\tau\|_{1,\alpha;\Omega}^{(-\alpha,\Sigma)}&\leq
\mc_3\left\|\frac{\mf_0(\e{{\bf W}},\h\Lambda,\h A,\h B)-\mf_0({{\bf W}},\h\Lambda,\h A,\h B)}{\tau}\right\|_{1,\alpha;\Omega}^{(-\alpha,\Sigma)}\\
 &\quad+ \mc_3\left(\sum_{j=1}^2 \left\|\frac{\mf_j(\e{{\bf W}},\n{\e{\bf W}},\h A,\h B)-\mf_j({{\bf W}},\n{{\bf W}},\h A,\h B)}{\tau}\right\|_{0,\alpha;\Omega}^{(1-\alpha,\Sigma)}\right)\\
 &\quad+\mc_3\left\|\frac{\mf_3(\e{{\bf W}},\h J,\h A,\h B)-\mf_3({{\bf W}},\h J,\h A,\h B)}{\tau}\right\|_{1,\alpha;[0,\frac12)}^{(-\alpha;\{\frac12\})}\\
   &\quad+\mc_3 \|(u_b^-+\e W_1)w_1+W_1^\tau w\|_{0,\alpha;[0,L]}\\
   &\leq \mc_3\left(\delta_2+4\mc_1\sigma_{cd}\right)\sum_{j=1}^3 \|W_j^\tau\|_{1,\alpha;\Omega}^{(-\alpha,\Sigma)}
  +4\mc_3
  \mc_1\sigma_{cd}\| w_1\|_{0,\alpha;[0,L]}\\
  &\quad+\mc_3\| w_1\|_{0,\alpha;[0,L]}\\
  &\leq\mc_3((4\mc_1+1)\sigma_{cd})
  \sum_{j=1}^3 \|W_j^\tau\|_{1,\alpha;\Omega}^{(-\alpha,\Sigma)}
  +\mc_3((4\mc_1+1)\sigma_{cd})\| w_1\|_{0,\alpha;[0,L]}\\
  &\quad+\mc_3\| w_1\|_{0,\alpha;[0,L]},\\
  \end{aligned}
    \end{equation}
     where $ \mc_3>0 $ depends only on   $ (\bm {U}_b^-,L,\alpha) $.
    Setting
  \begin{equation}\label{5-11}
  \sigma_4=\min\left\{\sigma_3,\frac{1}{4\mc_3(1+4\mc_1)}\right\},
   \end{equation}
where $ \sigma_3 $ is defined in \eqref{4-50}. Then for $ \tau \in (0,\tau_1) $ and $ \sigma_{cd}\leq \sigma_4 $, one has
  \begin{equation}\label{5-12}
  \sum_{j=1}^3 \|W_j^\tau\|_{1,\alpha;\Omega}^{(-\alpha,\Sigma)}\leq (2\mc_3+2)
  \| w_1\|_{0,\alpha;[0,L]}.
  \end{equation}
   Hence there exists a subsequence $\{\tau_k\}_{k=1}^\infty $ such that $ (W_1^{\tau_k},W_2^{\tau_k},W_3^{\tau_k}) $ converges to$ (W_1^{0},W_2^{0},W_3^{0}) $  in  $ C_{1,\alpha^\prime}^{(-\alpha^\prime,\Sigma)}(\Omega) $
   as $ \tau_k\rightarrow 0 $ for some $ 0<\alpha^\prime<\alpha $.
The estimate \eqref{5-12} also implies that  $ (W_1^{0},W_2^{0}, W_3^{0})\in \left(C_{1,\alpha}^{(-\alpha,\Sigma)}(\Omega)\right)^3 $ and
  \begin{equation}\label{5-13}
  \sum_{j=1}^3 \|W_j^0\|_{1,\alpha;\Omega}^{(-\alpha,\Sigma)}\leq
   (2\mc_3+2)\| w_1\|_{0,\alpha;[0,L]}.
  \end{equation}
  \par Define a map $ D_w\mq(\bm\varphi_0,w) $ by
\begin{equation}\label{5-14}
  \begin{aligned}
 D_w\mq(\bm\varphi_0,w)(w_1)
  =\sum_{j=1}^3\p_{W_j}N(W_1,W_2,W_3, \e A_{0},\e B_{0})W_j^0
  (y_1,\frac12). \
  \end{aligned}
  \end{equation}
   Then \eqref{5-13}  implies that $ D_w\mq(\bm\varphi_0,w) $ is  a linear mapping from $  \ml_1$ to $  \ml $.   Next, we need to show that $ D_w\mq(\bm\varphi_0,w) $  is  the Fr\'{e}chet derivative of the functional $\mq(\bm\varphi_0,w)$ with respect to $ w $.
    To this end,
   It follows from \eqref{5-9}  that ${\bf W}^{0}$ satisfies
  \begin{equation}\label{5-15}
  \begin{cases}
  \begin{aligned}
 & W_3^0=\sum_{j=1}^3\p_{W_j}\mf_0({{\bf W}},\h\Lambda,\h A,\h B)W_j^0,\\
  &(1-(M_b^-)^2)\p_{y_1}W_1^0
+\p_{y_2}W_2^0
+\frac{1}{y_2}W_2^0\\
&=\sum_{j=1}^3\left(\p_{W_j}\mf_1({{\bf W}},\n{{\bf W}},\h A,\h B)W_j^0+\p_{\n W_j}\mf_1({{\bf W}},\n{{\bf W}},\h A,\h B)\n W_j^0\right), \\
&\p_{y_1}W_2^0-\p_{y_2}W_1^0\\
&=\sum_{j=1}^3\left(\p_{W_j}\mf_2({{\bf W}},\n{{\bf W}},\h A,\h B)W_j^0+\p_{\n W_j}\mf_2({{\bf W}},\n{{\bf W}},\h A,\h B)\n W_j^0\right), \\
 &W_1^0(0,y_2)=\left(\sum_{j=1}^3\p_{W_j}\mf_3({{\bf W}},\h J,\h A,\h B)W_j^0\right)(0,y_2),\\
&W_2^0(L,y_2)=0, \
W_2^0(y_1,\frac12)=\left(u_b^-+ W_1(y_1,\frac12)\right)w_1(y_1)+W_1^0(y_1,\frac12) w(y_1),\
W_2^0(y_1,0)=0.\\
    \end{aligned}
    \end{cases}
  \end{equation}
   By taking difference  of \eqref{5-9} and \eqref{5-15},
    the following estimate can be derived:
  \begin{equation}\label{5-17}
  \begin{aligned}
    &\sum_{j=1}^3 \|W_j^\tau-W_j^0\|_{1,\alpha;\Omega}^{(-\alpha,\Sigma)}\\
  &\leq \mc_4\left\|\frac{\mf_0(\e{\bf W},\h\Lambda,\h A,\h B)-\mf_0({\bf W},\h\Lambda,\h A,\h B)}{\tau}-\sum_{j=1}^3\p_{W_j}\mf_0({{\bf W}},\h\Lambda,\h A,\h B)W_j^0\right\|_{1,\alpha;\Omega}^{(-\alpha,\Sigma)}\\
  &\quad+\mc_4\sum_{i=1}^2\left\|\frac{\mf_i(\e{{\bf W}},\n{\e{{\bf W}}},\h A,\h B)-\mf_i({{\bf W}},\n{{\bf W}},\h A,\h B)}{\tau}\right.\\
  &\qquad\qquad\quad\left.-\sum_{j=1}^3\left(\p_{W_j}\mf_i({{\bf W}},\n{{\bf W}},\h A,\h B)W_j^0+\p_{\n W_j}\mf_i({{\bf W}},\n{{\bf W}},\h A,\h B)\n W_j^0\right)\right\|_{0,\alpha;\Omega}^{(1-\alpha,\Sigma)}\\
   &\quad+\mc_4 \left\|\frac{\mf_3(\e{{\bf W}},\h J,\h A,\h B)-\mf_3({{\bf W}},\h J,\h A,\h B)}{\tau}-\sum_{j=1}^3\p_{W_j}\mf_3({{\bf W}},\h J,\h A,\h B)W_j^0\right\|_{1,\alpha;[0,\frac12)}^{(-\alpha;\{\frac12\})}\\
 &\quad+\mc_4\|\tau W_1^\tau w_1+(W_1^\tau-W_1^0)w\|_{0,\alpha;[0,L]}\\
 &\leq \mc_4\left(\delta_2
  +4\mc_1\sigma_{cd}\right)\sum_{j=1}^3 \|W_j^\tau-W_j^0\|_{1,\alpha;\Omega}^{(-\alpha,\Sigma)}
   +\mc_4(2\mc_3+2) \tau (\| w_1\|_{0,\alpha;[0,L]})^2\\
   &\leq \mc_4\left((4\mc_1+1)\sigma_{cd}
  \right)\sum_{j=1}^3 \|W_j^\tau-W_j^0\|_{1,\alpha;\Omega}^{(-\alpha,\Sigma)}
 +\mc_4(2\mc_3+2) \tau (\| w_1\|_{0,\alpha;[0,L]})^2,
\end{aligned}
    \end{equation}
    where   $ \mc_4>0 $   depends only on  $ (\bm {U}_b^-,L,\alpha) $.  Setting
  \begin{equation}\label{5-18}
   \sigma_5=\min\left\{\sigma_4,\frac{1}{8\mc_4(1+4\mc_1)}\right\}.
   \end{equation}
   Then for $ \sigma_{cd}\leq \sigma_5 $, one has
\begin{equation}\label{5-19}
  \sum_{j=1}^3 \|W_j^\tau-W_j^0\|_{1,\alpha;\Omega}^{(-\alpha,\Sigma)}\leq 4\mc_4(2\mc_3+2)
  \tau \left(\| w_1\|_{0,\alpha;[0,L]}\right)^2.
  \end{equation}
  \par By the definition of $ \mq(\bm\varphi_0,w) $ and $ D_w\mq(\bm \varphi_0,w) $, one gets
  \begin{equation}\label{5-20}
  \begin{aligned}
  &\left\| \mq(\bm\varphi_0,w+\tau w_1)-\mq(\bm\varphi_0,w)- D_w\mq(\bm\varphi_0,w)( \tau w_1)\right\|_{0,\alpha;[0,L]}\\
  &=\left\|N(\e W_1,\e W_2,\e W_3, \e A_{0},\e B_{0})(y_1,\frac12)-N(W_1,W_2,W_3, \e A_{0},\e B_{0})(y_1,\frac12)\right.\\
  &\quad\quad-\left.\tau\left(\sum_{j=1}^3\p_{W_j}N(W_1,W_2,W_3, \e A_{0},\e B_{0}) W_j^0\right)(y_1,\frac12)\right\|_{0,\alpha;[0,L]}\\
  &\leq \left\|N(\e W_1,\e W_2,\e W_3, \e A_{0},\e B_{0})(y_1,\frac12)-N(W_1,W_2,W_3, \e A_{0},\e B_{0})(y_1,\frac12)\right.\\
  &\quad\quad-\left.\tau\left(\sum_{j=1}^3\p_{W_j}N(W_1,W_2,W_3, \e A_{0},\e B_{0}) W_j^\tau\right)(y_1,\frac12)\right\|_{0,\alpha;[0,L]}\\
  &\quad+\left\|\tau\left(\sum_{j=1}^3\p_{W_j}N(W_1,W_2,W_3, \e A_{0},\e B_{0}) (W_j^\tau-W_j^0)\right)(y_1,\frac12)\right\|_{0,\alpha;[0,L]}\\
  &\leq \mc\tau^2\left(\sum_{j=1}^3 \|W_j^\tau\|_{1,\alpha;\Omega}^{(-\alpha,\Sigma)}\right)^2+
  \mc\tau\sum_{j=1}^3 \|W_j^\tau-W_j^0\|_{1,\alpha;\Omega}^{(-\alpha,\Sigma)}\\
  &\leq \mc \tau^2\left(\| w_1\|_{0,\alpha;[0,L]}\right)^2,
  \end{aligned}
    \end{equation}
    where   $ \mc>0 $   depends only on  $ (\bm {U}_b^-,L,\alpha) $.  Thus it holds that
    \begin{equation*}
  \frac{\left\| \mq(\bm\varphi_0,w+\tau w_1)-\mq(\bm\varphi_0,w)- D_w\mq(\bm\varphi_0,w)( \tau w_1)\right\|_{0,\alpha;[0,L]}}
  {\tau\|w_1\|_{0,\alpha;[0,L]}}\rightarrow 0
  \end{equation*}
  as $ \tau\rightarrow 0 $. Hence $ D_w\mq(\bm\varphi_0,w) $ is   the Fr\'{e}chet derivative of the functional $ \mq(\bm\varphi_0,w)$ with respect to $ w $.
  \par It remains to prove  the continuity of the map $ \mq(\bm\varphi_0,w) $ and $ D_w\mq(\bm\varphi_0,w) $. For any fixed $(\bm \varphi_0, w)\in \ml_0\times \ml_1(\delta_2) $, we assume that $ (\bm \varphi_0^k,w^k)\rightarrow (\bm \varphi_0, w) $ in $ \ml_0\times \ml_1(\delta_2)$ as $ k\rightarrow \infty $. Then we first show that as $ k\rightarrow \infty $,
  \begin{equation}\label{5-21}
  \mq(\bm\varphi_0^k,w^k)\rightarrow  \mq(\bm\varphi_0,w), \quad {\rm{in}}\quad  \ml.
  \end{equation}
  By \eqref{5-6} and \eqref{5-8}, ${\bf W}^{k}$ satisfies the following problem:
\begin{equation}\label{5-22}
\begin{cases}
\begin{aligned}
&W_3^k=\mf_0({{\bf W}}^k,\h\Lambda^k,\h A^k,\h B^k),\\
&(1-(M_b^-)^2)\p_{y_1}W_1^k
+\p_{y_2}W_2^k
+\frac{1}{y_2}W_2^k=
\mf_1({{\bf W}}^k,\n{{\bf W}}^k,\h A^k,\h B^k), \\
&\p_{y_1}W_2^k-\p_{y_2}W_1^k
=\mf_2({{\bf W}}^k,\n{{\bf W}}^k,\h A^k,\h B^k), \\
 &W_1^k(0,y_2)=\mf_3({{\bf W}}^k,\h J^k,\h A^k,\h B^k)(0,y_2), \\
&W_2^k(L,y_2)=0, \\
&W_2^k(y_1,\frac12)=\left(u_b^-+W_1^k(y_1,\frac12)\right)w^k(y_1),\\
&W_2^k(y_1,0)=0,\\
\end{aligned}
\end{cases}
\end{equation}
where
\begin{equation*}
(\h J^k,\h\Lambda^k,\h A^k,\h B^k)
=(\e J_0^k,\e\nu_0^k,\e A_0^k,\e B_0^k)-(J_b^-,0,A_b^-,B_b^-).
\end{equation*}
Taking the difference of \eqref{5-8} and  \eqref{5-22}, one obtains that
  \begin{equation}\label{5-23}
\begin{cases}
\begin{aligned}
&W_3^k-W_3^0=\mf_0({{\bf W}}^k,\h\Lambda^k,\h A^k,\h B^k)-\mf_0({{\bf W}},\h\Lambda,\h A,\h B),\\
&(1-(M_b^-)^2)\p_{y_1}(W_1^k-W_1)
+\p_{y_2}(W_2^k-W_2)
+\frac{1}{y_2}(W_2^k-W_2)\\
&=
\mf_1({{\bf W}}^k,\n{{\bf W}}^k,\h A^k,\h B^k)-\mf_1({{\bf W}},\n{{\bf W}},\h A,\h B), \\
&\p_{y_1}(W_2^k-W_2)-\p_{y_2}(W_1^k-W_1)\\
&=\mf_2({{\bf W}}^k,\n{{\bf W}}^k,\h A^k,\h B^k)-\mf_2({{\bf W}},\n{{\bf W}},\h A,\h B)
, \\
 &(W_1^k-W_1)(0,y_2)=\left(\mf_3({{\bf W}}^k,\h J^k,\h A^k,\h B^k)-\mf_3({{\bf W}},\h J,\h A,\h B)\right)(0,y_2), \\
&(W_2^k-W_2)(L,y_2)=0, \\
&(W_2^k-W_2)(y_1,\frac12)=u_b^-(w^k-w)(y_1)+W_1^k(y_1,\frac12)w^k(y_1)
-W_1(y_1,\frac12)w(y_1),\\
&(W_2^k-W_2)(y_1,0)=0.\\
\end{aligned}
\end{cases}
\end{equation}
By similar   estimate in \eqref{5-10}, one can infer  that
\begin{equation}\label{5-24}
\sum_{j=1}^3 \|W_j^k-W_j\|_{1,\alpha;\Omega}^{(-\alpha,\Sigma)}
\leq \mc \left(\|\bm\varphi_0^k-\bm\varphi_0\|_{\ml_0}+\|w^k- {w}\|_{0,\alpha;[0,L]}\right).
\end{equation}
Therefore,
  \begin{equation}\label{5-25}
  \begin{aligned}
   &\|\mq(\bm\varphi_0^k,w^k)-\mq(\bm\varphi_0,w)\|
   _{0,\alpha;[0,L]}\\
  &=\left\|N(W_1^k,W_2^k,W_3^k, \e A_{0}^k,\e B_{0}^k)-N(W_1,W_2,W_3, \e A_{0},\e B_{0})\right\|
   _{0,\alpha;[0,L]}\\
  &\leq \mc\left(\sum_{j=1}^3 \|W_j^k-W_j\|_{1,\alpha;\Omega}^{(-\alpha,\Sigma)}
+\|\bm\varphi_0^k-\bm\varphi_0\|_{\ml_0}\right)\\
&\leq \mc \left(\|\bm\varphi_0^k-\bm\varphi_0\|_{\ml_0}+\|w^k- {w}\|_{0,\alpha;[0,L]}\right),
  \end{aligned}
  \end{equation}
  which yields \eqref{5-21}.
   \par Next, we  prove the continuity of the map $ D_w\mq(\bm\varphi_0,w) $, i.e. to show that as $ k\rightarrow \infty $,
  \begin{equation}\label{5-26}
  D_w\mq(\bm\varphi_0^k,w^k)\rightarrow   D_w\mq(\bm\varphi_0,w), \quad {\rm{in}}\quad  \ml.
  \end{equation}
  It follows from \eqref{5-15} that
  ${\bf W}^{0,k} $ satisfies the following problem:
  \begin{equation}\label{5-27}
  \begin{cases}
  \begin{aligned}
  & W_3^{0,k}=\sum_{j=1}^3\p_{W_j^k}\mf_0({{\bf W}}^{k},\h\Lambda^{k},\h A^{k},\h B^{k})W_j^{0,k},\\
  &(1-(M_b^-)^2)\p_{y_1}W_1^{0,k}
+\p_{y_2}W_2^{0,k}
+\frac{1}{y_2}W_2^{0,k}\\
&=
\sum_{j=1}^3\left(\p_{W_j^k}\mf_1({{\bf W}}^{k},\n{{\bf W}}^{k},\h A^{k},\h B^{k})W_j^{0,k}-\p_{\n W_j^k}\mf_1({{\bf W}}^{k},\n{{\bf W}}^{k},\h A^{k},\h B^{k})\n W_j^{0,k}\right), \\
&\p_{y_1}W_2^{0,k}-\p_{y_2}W_1^{0,k}\\
&=
\sum_{j=1}^3\left(\p_{W_j^k}\mf_2({{\bf W}}^{k},\n{{\bf W}}^{k},\h A^{k},\h B^{k})W_j^{0,k}-\p_{\n W_j^k}\mf_2({{\bf W}}^{k},\n{{\bf W}}^{k},\h A^{k},\h B^{k})\n W_j^{0,k}\right), \\
&W_1^{0,k}(0,y_2)=\left(\sum_{j=1}^2\p_{W_j^k}\mf_3({{\bf W}}^{k},\h J^{k},\h A^{k},\h B^{k})W_j^{0,k}\right)(0,y_2),\\
&W_2^{0,k}(L,y_2)=0,\\
&W_2^{0,k}(y_1,\frac12)=\left(u_b^-+ W_1^{k}(y_1,\frac12)\right)w_1(y_1
)+W_1^{0,k}(y_1,\frac12) w^{k}(y_1),\\
&W_2^{0,k}(y_1,0)=0.\\
    \end{aligned}
    \end{cases}
  \end{equation}
  Taking the difference of \eqref{5-15} and \eqref{5-27}, one has
  \begin{equation*}
  \begin{aligned}
\sum_{j=1}^3 \|W_j^{0,k}-W_j^0\|_{1,\alpha;\Omega}^{(-\alpha,\Sigma)}
\leq \mc \left(\sum_{j=1}^3 \|W_j^{k}-W_j\|_{1,\alpha;\Omega}^{(-\alpha,\Sigma)}
+\|\bm\varphi_0^k-\bm\varphi_0\|_{\ml_0}+\|w^k- {w}\|_{0,\alpha;[0,L]}\right).
\end{aligned}
\end{equation*}
 Combining \eqref{5-24} yields that
  \begin{equation}\label{5-28}
  \begin{aligned}
  \sum_{j=1}^3 \|W_j^{0,k}-W_j^0\|_{1,\alpha;\Omega}^{(-\alpha,\Sigma)}
\leq \mc \left(
\|\bm\varphi_0^k-\bm\varphi_0\|_{\ml_0}+\|w^k- {w}\|_{0,\alpha;[0,L]}\right).
  \end{aligned}
  \end{equation}
  Then it holds that
   \begin{equation}\label{5-29}
  \begin{aligned}
  &\|D_w\mq(\bm\varphi_0^k,w^k)(w_1)-   D_w\mq(\bm\varphi_0,w)(w_1)\| _{0,\alpha;[0,L]}\\
  &=\left\|\sum_{j=1}^3\left(\p_{W_j^k}N(W_1^k,W_2^k,W_3^k, \e A_{0}^k,\e B_{0}^k) W_j^{0,k}-\p_{W_j}N(W_1,W_2,W_3, \e A_{0},\e B_{0}) W_j^0\right)\right\|_{0,\alpha;[0,L]}\\
   &\leq \mc\left(\sum_{j=1}^3 \|W_j^{0,k}-W_j^0\|_{1,\alpha;\Omega}^{(-\alpha,\Sigma)}+\sum_{j=1}^3 \|W_j^{k}-W_j\|_{1,\alpha;\Omega}^{(-\alpha,\Sigma)}
+\|\bm\varphi_0^k-\bm\varphi_0\|_{\ml_0}\right)\\
&\leq \mc \left(
\|\bm\varphi_0^k-\bm\varphi_0\|_{\ml_0}+\|w^k- {w}\|_{0,\alpha;[0,L]}\right),
   \end{aligned}
  \end{equation}
    which implies that \eqref{5-26} holds.
     \par In particular, at the background state,
  \begin{equation}\label{5-30}
  \begin{aligned}
D_w\mq(\bm\varphi_b,0)(w_1)
  =-J_b^-W_1^b(y_1,\frac12)=-2W_1^b(y_1,\frac12),
  \end{aligned}
  \end{equation}
  where ${\bf W}^{b} $ satisfies
  \begin{equation}\label{5-31}
  \begin{cases}
  \begin{aligned}
  &(1-(M_b^-)^2)\p_{y_1}W_1^b
+\p_{y_2}W_2^b
+\frac{1}{y_2}W_2^b=0, \\
&\p_{y_1}W_2^b-\p_{y_2}W_1^b=0, \\
&W_3^b=0,\\
 &W_1^b(0,y_2)=0,\\
&W_2^b(L,y_2)=0, \\
&W_2^b(y_1,\frac12)=u_b^-w_1(y_1),\\
&W_2^b(y_1,0)=0.\\
    \end{aligned}
    \end{cases}
  \end{equation}
    Define
   \begin{equation}\label{5-c}
 \ml_b=\{v\in \ml: v(0)=0\}.
  \end{equation}
  Then    $ D_w\mq(\bm\varphi_b,0) $ is  a continuous mapping from $  \ml_1 $ to $  \ml_b \subset\ml $.
  \par {\bf Step 2. The isomorphism of $ D_w\mq(\bm \varphi_b,0)$. }
    \par To prove the isomorphism of $ D_w\mq(\bm \varphi_b,0)$, we need to show that for any given function $  P^\ast\in \ml_b$, there exists a unique $  w^\ast\in \ml_1 $ such that $ D_w\mq(\bm \varphi_b,0)(w^\ast)=P^\ast $, i.e.,
\begin{equation}\label{5-32}
\begin{aligned}
P^\ast(y_1)= -2W_1^\ast(y_1,\frac12).
\end{aligned}
\end{equation}
It follows from \eqref{5-31} that the solution $ (W_1^\ast,W_2^\ast,0) $ satisfies
 \begin{equation}\label{5-33}
\begin{cases}
\begin{aligned}
&(1-(M_b^-)^2)\p_{y_1} W_1^\ast
+\p_{y_2} W_2^\ast
+\frac{1}{y_2} W_2^\ast=
0, \\
&\p_{y_1}W_2^\ast-\p_{y_2} W_1^\ast
=0, \\
 & W_1^\ast(0,y_2)=0, \\
& W_2^\ast(L,y_2)=0, \\
& W_1^\ast(y_1,\frac12)=-\frac{P^\ast(y_1)} { 2},\\
& W_2^\ast(y_1,0)=0.\\
\end{aligned}
\end{cases}
\end{equation}
The second equation in \eqref{5-33} implies that there exists a potential function $ \phi^\ast $ such that
 \begin{equation}\label{5-34}
(\p_{y_1}\phi^\ast,\p_{y_2}\phi^\ast)=(W_1^\ast,W_2^\ast), \quad \phi^\ast(L,0)=0.
\end{equation}
Then \eqref{5-33} can be written as
\begin{equation}\label{5-35}
\begin{cases}
\begin{aligned}
&\p_{y_1}((1-(M_b^-)^2)y_2\p_{y_1}\phi^\ast)
+\p_{y_2}(y_2\p_{y_2}\phi^\ast)
=0, \\
 &\p_{y_1}\phi^\ast(0,y_2)=0,\\
&\phi^\ast(L,y_2)=0, \\
&\phi^\ast(y_1,\frac12)=\int_{L}^{y_1}-\frac{P^\ast(s)} { 2}\de s,\\
&\p_{y_2}\phi^\ast(y_1,0)=0.\\
\end{aligned}
\end{cases}
\end{equation}
\par  To deal with the singularity near $ y_2=0 $, we rewrite the problem \eqref{5-35} by using the cylindrical coordinate transformation again. Define
\begin{equation*}
\lambda_1=y_1,\ \lambda_2=y_2\cos\tau,\ \lambda_3=y_2\sin\tau,\ \tau\in[0,2\pi],
\end{equation*}
and
\begin{equation*}
\begin{aligned}
&D_1=\{(\lambda_1,\lambda_2,\lambda_3):0<\lambda_1<L,\lambda_2^2+\lambda_3^2\leq \frac12\}, \quad D_2= \{(\lambda_2,\lambda_3): \lambda_2^2+\lambda_3^2\leq \frac12\},\\
&\Gamma_{w,\lambda}=[0,L]\times\{(\lambda_2,\lambda_3):
\lambda_2^2+\lambda_3^2= \frac12\},\\
&\Gamma_{0,\lambda}=\{0\}\times\{(\lambda_2,\lambda_3):\lambda_2^2
+\lambda_3^2\leq \frac12\},\quad
\Gamma_{L,\lambda}=\{L\}\times\{(\lambda_2,\lambda_3):
\lambda_2^2+\lambda_3^2\leq \frac12\},\\
&\Psi^\ast(\bm \lambda)= \phi^\ast(\lambda_1,\sqrt{\lambda_2^2+\lambda_3^2})=
\Psi^\ast(\lambda_1,|\lambda^\prime|).
\end{aligned}
\end{equation*}
Then $ \Psi^\ast $ solves the following problem
 \begin{equation}\label{5-36}
\begin{cases}
\begin{aligned}
&(1-(M_b^-)^2)\p_{\lambda_1}^2 \Psi^\ast+\p_{\lambda_2}^2 \Psi^\ast+\p_{\lambda_3}^2 \Psi^\ast
=0, \\
 &\p_{\lambda_1}\Psi^\ast(0,|\lambda'|)=0, \quad &{\rm{on}}\quad \Gamma_{0,\lambda},\\
&\Psi^\ast(L,|\lambda'|)=0, \quad &{\rm{on}}\quad \Gamma_{L,\lambda},\\
&\Psi^\ast(\lambda_1,|\lambda'|)=\int_{L}^{\lambda_1}-\frac{P^\ast(s)} { 2}\de s, \quad &{\rm{on}}\quad \Gamma_{w,\lambda}.\\
\end{aligned}
\end{cases}
\end{equation}
 By similar arguments as in the Step 3 of Lemma 4.1, \eqref{5-36} has a unique solution $ \Psi^\ast\in C_{2,\alpha}^{(-1-\alpha,\Gamma_{w,\lambda})}(D_1) $ satisfying
\begin{equation}\label{5-37}
\|\Psi^\ast\|_{2,\alpha;D_1}^{(-1-\alpha,\Gamma_{w,\lambda})}
\leq \mc \|P^\ast\|_{0,\alpha;[0,L]}.
\end{equation}
Thus
\begin{equation}\label{5-38}
\sum_{i=1}^2\|W_i^\ast\|_{1,\alpha;\Omega}^{(-\alpha,\Sigma)}
\leq \mc \|P^\ast\|_{0,\alpha;[0,L]}.
\end{equation}
\par Set $ w^\ast(y_1)=u_b^- W_2^\ast(y_1,\frac12) $, then \eqref{5-38} shows that $ w^\ast\in \ml_1  $. Hence we have shown there exists a unique $ w^\ast\in \ml_1 $ such that $ D_w\mq(\bm\varphi_b,0)(w^\ast)= P^\ast $. The proof of the isomorphism of $ D_w\mq(\bm\varphi_b,0)$ is completed.
\section{Proof of Theorem 2.3}\noindent
\par Now, by the implicit function theorem, there exist  positive constants $ \sigma_6  $ and $ \mc $ depending only on $ (\bm {U}_b^-,L,\alpha) $ such that for $ \delta_3\leq \sigma_6 $, the equation $  \mq(\bm \varphi_0,w)=0 $ has a unique solution $ w=w(\bm \varphi_0) $ satisfying
\begin{equation}\label{6-1}
\|w\|_{0,\alpha;[0,L]}\leq \mc\|\bm \varphi_0-\bm \varphi_b\|_{\ml_0}= \mc\sigma_{cd}.
\end{equation}
Here $ \delta_3 $ is defined in \eqref{5-3}.  Hence  the contact discontinuity $ g_{cd}(y_1)=\int_{0}^{y_1}w(s) \de s+\frac12 $ is determined  and  \eqref{6-1} implies that
\begin{equation}\label{6-2}
\|g_{cd}-\frac12\|_{1,\alpha ;[0,L]}\leq \mc_5\|\bm \varphi_0-\bm \varphi_b\|_{\ml_0}= \mc_5\sigma_{cd},
\end{equation}
where $ \mc_5>0 $ depends only on $ (\bm {U}_b^-,L,\alpha) $ .
\par We choose $  \sigma_1 $ and  $\mc^\ast $ as
   \begin{equation}\label{6-3}
   \sigma_1=\min\{\sigma_5,\sigma_6\} \quad {\rm{ and}} \quad \mc^\ast=4\mc_1+\mc_5,
   \end{equation}
   where $ \sigma_5 $  is defined in \eqref{5-18} and $ \mc_1 $ is  defined in \eqref{4-45}.
  Then if $ \sigma_{cd}\leq\sigma_1 $,  $ \mathbf{Problem \ 3.1}  $ has  a unique  smooth   subsonic solution $ (u_x,u_r,u_\theta;g_{cd})$ satisfying
 \begin{equation}\label{6-4}
 \|u_x -u_b^-\|_{1,\alpha;\Omega}^{(-\alpha,\Sigma)}
 +\|u_r\|_{1,\alpha;\Omega}^{(-\alpha,\Sigma)}+
 \|u_\theta\|_{1,\alpha;\Omega}^{(-\alpha,\Sigma)}+\| g_{cd} -\frac12 \|_{1,\alpha;[0,L]}\leq \mc^\ast\sigma_{cd}.
 \end{equation}
 Furthermore, one has
 \begin{equation}\label{6-5}
 \|(A,B)-(A_b^-,B_b^-)\|_{1,\alpha;\overline\Omega}
 =\|(\e A_0,\e B_0)-(A_b^-,B_b^-)\|_{1,\alpha;\overline\Omega}\leq \mc^\ast\sigma_{cd}.
 \end{equation}

 \par Since the modified  Lagrangian transformation \eqref{3-4} is invertible, thus the solution transformed back in $ (x,r)$-coordinates solves $ \mathbf{Problem \ 2.2}  $ and the estimates \eqref{6-4} and \eqref{6-5}
 imply that the estimates \eqref{2-19} and \eqref{2-20}  in   Theorem 2.3 hold. Thus the proof of
  Theorem 2.3 is completed.
  \par {\bf Acknowledgement.} Weng is partially supported by National Natural Science Foundation of China  11971307, 12071359, 12221001.

\end{document}